\documentclass[11pt]{article}
\usepackage[a4paper,margin=2.6cm]{geometry}
\usepackage{amsmath,amssymb,amsthm}
\usepackage{booktabs}

\newtheorem{theorem}{Theorem}
\newtheorem{lemma}{Lemma}
\newtheorem{proposition}{Proposition}
\newtheorem{corollary}{Corollary}
\newtheorem{definition}{Definition}
\theoremstyle{remark}
\newtheorem{remark}{Remark}

\newcommand{\LB}{\mathrm{LB}}
\newcommand{\bg}{\mathrm{bg}}
\newcommand{\KS}{\mathrm{KS}}
\newcommand{\refi}{\mathrm{ref}}
\newcommand{\vput}{\mathrm{put}}
\newcommand{\diam}{\operatorname{diam}}

\title{A certified refinement and asymptotic analysis\\
of the Kuznetsov--Sahinidis diameter bound\\
for Lennard--Jones clusters}
\author{Guillaume Lecomte\thanks{Corresponding author. All results were independently regenerated and checked by the author.}}
\date{July 9, 2026}

\begin{document}
\maketitle

\begin{abstract}
Kuznetsov and Sahinidis (\emph{J.\ Glob.\ Optim.}, 2025) prove distance bounds that confine optimal Lennard--Jones clusters and shrink the search region of deterministic solvers; their diameter bound charges each unit-width layer the loosest internal energy $-\binom n2$. We replace this by a certified estimate built from their own subset inequality and the proven minima $V^*_5$, $V^*_6$, and minimize the resulting layer bound over population profiles. Centred decreasing profiles supply candidate minima; an arrangement-free relaxation, whose only classical ingredient is the rearrangement inequality for sequences, closes every certificate over all profiles; and all comparisons are re-verified in directed-rounding arithmetic. For $5\le N\le 200$ this certifies a one-layer improvement of the published bound at 92 sizes. The improvement resolves no open global-optimization case: it is a rigorous tightening of a published a priori bound, with a precise account of the mechanism. A direct downstream test on the only tractable sizes ($N\le 6$) finds the diameter box is not the binding resource for a deterministic solver there, and no solver runs at the sizes the refinement affects ($N\ge 38$); we therefore present the result as a theoretical note. We also derive the asymptotic form of the bound, $\rho_\KS = N-\Theta(\sqrt N)$, resolving a point left open by Kuznetsov and Sahinidis; the gain of the refinement itself grows like $\Theta(\sqrt N)$ layers, with an exact asymptotic constant.
\end{abstract}

\section{Introduction}\label{sec:intro}

The Lennard--Jones (LJ) cluster problem asks for the arrangement of $N$ identical atoms in $\mathbb R^3$ that minimizes the pairwise potential energy
\begin{equation}\label{eq:pot}
V_{\mathrm{LJ}}(r) = r^{-12} - 2\,r^{-6}, \qquad V_{\mathrm{LJ}}(1) = -1 = \min_r V_{\mathrm{LJ}},
\end{equation}
summed over all pairs, distances being measured in units of the equilibrium pair distance. Finding the global minimum is a classical benchmark in molecular conformation and a hard nonconvex global-optimization problem. While thousands of putative global minima are tabulated from heuristic search, deterministic global optimization, which returns a numerical certificate of optimality, has succeeded only for the very smallest clusters.

A key ingredient of any deterministic approach is an a priori bound on the diameter of an optimal cluster: it restricts the spatial box in which atoms may lie and thus the branch-and-bound search space. Kuznetsov and Sahinidis \cite{ks2025} give such a bound through an elegant layer argument. The present note refines it. We keep the KS layer construction, change the energy charged to a single layer, and use the conservative inter-layer separation $d-1$ when assigning the pairwise floor (Section~\ref{sec:ks}). Charging each layer a certified subset-based estimate, rather than the trivial all-pairs value, yields a strictly tighter diameter bound for 92 sizes in the range $15\le N\le 200$. The per-layer refinement uses only the KS subset inequality, the certified values $V^*_5$ \cite{vanaret2014} and $V^*_6$ \cite{ks2025}, and explicit upper-bound configurations through $V_\vput(N)$. We state the main limitation at the outset: this note certifies no new global minimum for any $N$; it tightens an a priori geometric bound.

The technical content is concentrated in two elementary reductions. Replacing the per-layer term makes the diameter bound the minimum of a nonconvex integer quadratic program (Section~\ref{sec:refined}), whose value is a genuine lower bound on the energy and therefore, unlike a heuristic minimizer, usable in a proof. Section~\ref{sec:candidates} constructs a family of candidate profiles, indexed by integer partitions, and proves the arrangement-free lower bound that certifies the refinement over all profiles; Section~\ref{sec:numerics} reports the certified comparisons. A final section (Section~\ref{sec:asym}) turns from certificates to structure: working with the layer program in its original KS form, it determines the asymptotic shape of the bound, $\rho_\KS(N)=N-\Theta(\sqrt N)$, settling a question left open in \cite{ks2025} and showing its fitted linear slope to be a finite-size artifact, and proves that the gain of the refinement grows like $\Theta(\sqrt N)$ layers (Corollary~\ref{cor:gain}).

\section{State of the art}\label{sec:soa}

The decision problem is NP-hard \cite{wille1985}, and the global minimum is certified deterministically only for $N\le 6$: $N=5$ by Vanaret et al.\ with the interval solver Charibde \cite{vanaret2014,vanaret2020} ($V^*_5=-9.103852$), and $N=6$, the regular octahedron ($V^*_6=-12.712062$), in 2025 by Kuznetsov and Sahinidis \cite{ks2025} with Baron. For $N\ge 7$ no certificate is known.

For all $N$ in our range, high-quality putative minima $V_\vput(N)$ are tabulated \cite{leary1997,northby1987,walesdoye1997,romero1999}; being energies of explicit configurations, they upper-bound the true minimum,
\begin{equation}\label{eq:put}
V^*_N \le V_\vput(N).
\end{equation}
This is all we use; the refinement does not assume any $V_\vput(N)$ is the true minimum. Alongside optimality certificates, \cite{ks2025} derives the diameter bound refined here.

\section{The Kuznetsov--Sahinidis layer bound}\label{sec:ks}

Orient a cluster so that a diameter lies along the $x$-axis and partition space into unit-width layers $L_k=\{x: k-1\le x<k\}$. For a minimizer the atoms occupy $D$ consecutive nonempty layers, so with $n_k\ge 1$ the number of atoms in $L_k$ we have $\sum_{k=1}^D n_k=N$ \cite{ks2025}. Under this orientation every atom has its $x$-coordinate between those of the extremal pair (otherwise some pair would exceed the diameter), so the $x$-extent equals the diameter, and with the first and last layers nonempty, $D-2<\diam<D$. Layer-span and diameter bounds are therefore interchangeable: a certified bound $D\le\rho$ yields $\diam<\rho$, and the refinement to $D\le\rho-1$ yields $\diam<\rho-1$. This is the convention used throughout.

We record separately the structural fact, established in \cite{ks2025}, that a global minimizer occupies a block of consecutive layers, all nonempty. This is the only structural input beyond the layer bound itself, and it is what lets Section~\ref{sec:refined} pass from the profile enumeration, which ranges over $n_k\ge 1$, to a statement about minimizers: the layer span of a minimizer is a fully occupied profile, hence covered by the enumeration. It is genuinely needed, because a non-minimizing configuration may leave internal layers empty; such configurations are not bounded below by the argument of Section~\ref{sec:candidates}, and the diameter conclusion is therefore drawn for minimizers only, as it must be.

Two lower bounds enter.

\emph{Intra-layer.} The internal energy of layer $k$ is bounded below by the minimum energy of $n_k$ free atoms, $V^*_{n_k}$. Kuznetsov and Sahinidis use the trivial bound obtained by charging every intra-layer pair its floor value $-1$:
\begin{equation}\label{eq:intra}
\text{(energy of layer $k$)} \;\ge\; -\binom{n_k}{2}.
\end{equation}

\emph{Inter-layer.} Two atoms in layers at index distance $d=|i-j|$ have $x$-separation at least $d-1$, hence Euclidean separation at least $d-1$. Since $V_{\mathrm{LJ}}$ attains its minimum $-1$ at $r=1$ and increases on $[1,\infty)$,
\begin{equation}\label{eq:kernel}
\bar v(d) =
\begin{cases}
-1, & d\le 2,\\
V_{\mathrm{LJ}}(d-1), & d\ge 3,
\end{cases}
\end{equation}
is a valid lower bound on the interaction of one such pair. (Reference \cite{ks2025} charges $V_{\mathrm{LJ}}(d)$ for $d\ge3$; we use the conservative $V_{\mathrm{LJ}}(d-1)$, which is the value justified by the separation $d-1$. The difference is numerically negligible and does not favour our result. Concretely, the baseline bounds $\rho_\KS$ of Section~\ref{sec:numerics} are computed with the original KS kernel $V_{\mathrm{LJ}}(d)$, whereas the refined thresholds use the looser $\bar v$ of \eqref{eq:kernel}; the refinement thus handicaps itself on the inter-layer term, so the comparison between the two is conservative. That this looser kernel does not shift the integer $\rho_\KS$ on the overlap is verified in Section~\ref{sec:numerics}.)

Summing gives a rigorous lower bound on the energy of any cluster spanning $D$ layers with profile $n=(n_1,\dots,n_D)$:
\begin{equation}\label{eq:elb}
E_\LB(n) = \sum_{k=1}^D f(n_k) + \sum_{i<j} \bar v(|i-j|)\, n_i n_j,
\end{equation}
with $f(n)=-\binom n2$ in the KS bound. If
\begin{equation}\label{eq:rule}
\min_{\substack{n_k\ge1\\ \sum_k n_k=N}} E_\LB(n) \;>\; V_\vput(N),
\end{equation}
then by \eqref{eq:put} no optimal cluster spans $D$ layers, and the diameter bound is decremented. With $f=-\binom n2$ the inner minimum is attained by piling all surplus atoms into one central layer. Over the published range $N\le150$ the bound is well fitted by $0.798\,N$; Section~\ref{sec:asym} shows this linear fit is a finite-size artefact and the true form is $N-\Theta(\sqrt N)$ (Theorem~\ref{thm:asym}). Running the procedure reproduces the published bounds exactly (Section~\ref{sec:numerics}).

\section{The refined bound}\label{sec:refined}

The bound \eqref{eq:intra} is loose because $-\binom n2$ assumes all $\binom n2$ intra-layer pairs simultaneously sit at the potential minimum, which is geometrically impossible for $n\ge5$. We charge instead the stronger certified per-layer estimate
\begin{equation}\label{eq:f}
f(n) =
\begin{cases}
V^*_n, & 0\le n\le 6,\\[4pt]
\dfrac{n(n-1)}{30}\, V^*_6, & n\ge 7,
\end{cases}
\qquad
(V^*_{0..6}) = (0,\,0,\,-1,\,-3,\,-6,\,-9.103852,\,-12.712062).
\end{equation}
For $n\le 6$ the values are the proven small-cluster minima recalled in Section~\ref{sec:soa}. For $n\ge7$ we use the subset inequality of \cite{ks2025},
\begin{equation}\label{eq:subset}
V^*_n \;\ge\; \frac{n(n-1)}{M(M-1)}\, V^*_M, \qquad M\le n,
\end{equation}
at $M=6$. Since $V^*_6$ is proven, \eqref{eq:f} satisfies $f(n)\le V^*_n$ for all $n$, and is therefore a valid lower bound on the internal energy of a layer. A linear stability floor $f(n)\ge -c\,n$ also holds but is never binding here (the subset bound \eqref{eq:subset} dominates it for all $n\le34$, and the largest layer population in any certified optimum is 13), so we omit it.

With $f$ given by \eqref{eq:f}, expression \eqref{eq:elb} remains a rigorous lower bound on the energy, and the rule \eqref{eq:rule} remains valid. Because $f(n)\ge-\binom n2$, the refined $E_\LB$ dominates the KS one pointwise, and the refined bound can only be at least as tight.

The subset estimate is deliberately loose for $n\ge7$ ($f(13)=-66.1$ against $V^*_{13}\approx-44.3$), which could in principle let a concentrated profile lower $E_\LB$; but the arrangement-free closure of Lemma~\ref{lem:free} bounds every profile, concentrated ones included, and at the minimizing profile the looseness is inactive, the optimum spreading the surplus into central layers of five or six atoms where $f$ takes the exact values $V^*_5$, $V^*_6$.

The essential point is logical: applying \eqref{eq:rule} needs the \emph{minimum} of $E_\LB$ over all profiles, a rigorous lower bound on the energy, whereas a clever profile or a heuristic gives only an upper bound on that minimum and certifies nothing. Natural shortcuts fail: block grouping is unsound (an adversarial profile straddling block boundaries recovers the loose value $V^*_a+V^*_b-ab=-\binom{a+b}2$), and a semidefinite relaxation has a gap (4 to 9 in energy) exceeding the margin to be cleared. We therefore work with the minimum itself, constructed and certified in the next section.

\section{Candidate profiles and the certification lemma}\label{sec:candidates}

Write $n_k=1+e_k$ with $e_k\ge0$ and $\sum_k e_k = e := N-D$, and set $\kappa(d):=-\bar v(d)\ge0$. From \eqref{eq:kernel}, $\kappa(1)=\kappa(2)=1$ and $\kappa(d)=2(d-1)^{-6}-(d-1)^{-12}$ for $d\ge3$; in particular $\kappa$ is nonincreasing. Split \eqref{eq:elb} as
\begin{equation}\label{eq:split}
E_\LB(n) = \underbrace{\sum_k f(n_k)}_{S(n)} \;-\; \underbrace{\sum_{i<j}\kappa(|i-j|)\, n_i n_j}_{Q(n)}.
\end{equation}

\begin{lemma}[Reduction to band forms]\label{lem:band}
Fix the multiset of populations. Then $Q=\sum_{t\ge1}\Delta_t\,R_t$ with $\Delta_t:=\kappa(t)-\kappa(t+1)\ge0$ and $R_t(n)=\sum_{i<j:\,|i-j|\le t} n_i n_j$; in particular, any arrangement of the multiset that maximizes every band form $R_t$ simultaneously maximizes $Q$, hence minimizes $E_\LB$.
\end{lemma}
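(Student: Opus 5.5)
The plan is an Abel summation (summation by parts) in the distance variable, followed by the observation that $S$ depends only on the multiset of populations. Since $D$ is finite, only $t\le D-1$ contributes. I will therefore work with the finite index range $1\le t\le D-1$ and treat $\kappa(D)$ as the tail term. To make the tail term disappear, I will either extend $\kappa$ by $\kappa(t)=0$ for $t\ge D$ in the telescoping (the natural reading when the sum is taken over all $t\ge1$), or use the fact that $\kappa(d)\to0$ as $d\to\infty$, so that $\sum_{t\ge d}\Delta_t=\kappa(d)$.

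First, I will write $\kappa(d)=\sum_{t\ge d}\Delta_t$ for every $d\ge1$. This holds because the partial sums telescope to $\kappa(d)-\kappa(T+1)$, and $\kappa(T+1)=2T^{-6}-T^{-12}\to0$. Next, I will substitute this into $Q=\sum_{i<j}\kappa(|i-j|)n_in_j$ and exchange the order of summation; this is legitimate because the pair sum is finite and, as shown next, every term is nonnegative. The result is
\[
Q=\sum_{t\ge1}\Delta_t\sum_{i<j:\,|i-j|\le t}n_in_j=\sum_{t\ge1}\Delta_t R_t(n).
\]
For $t\ge D-1$ the form $R_t$ equals the full sum $\sum_{i<j}n_in_j$, which is fixed by the multiset. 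This tail therefore contributes $\kappa(D)\sum_{i<j}n_in_j$, a constant, and the identity holds as stated.

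The one genuinely non-formal step is $\Delta_t\ge0$, that is, that $\kappa$ is nonincreasing. For $t=1$ we have $\Delta_1=\kappa(1)-\kappa(2)=0$. For $t=2$ we have $\Delta_2=1-\kappa(3)=1-(2\cdot2^{-6}-2^{-12})>0$. For $t\ge3$ I will use $\kappa(d)=g(d-1)$ with $g(r)=2r^{-6}-r^{-12}=-V_{\mathrm{LJ}}(r)$. Since $V_{\mathrm{LJ}}$ increases on $[1,\infty)$ (its derivative $-12r^{-13}+12r^{-7}$ is nonnegative for $r\ge1$), $g$ is nonincreasing there. The point I expect to need care with is the junction $t=2$: the kernel switches from the constant $-1$ to $V_{\mathrm{LJ}}(d-1)$ there, so it must be checked by hand, and it holds because $g\le1$ everywhere.

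Finally, I will draw the conclusion. $S(n)=\sum_k f(n_k)$ is invariant under permuting the populations, so within a fixed multiset $E_\LB=S-Q$ is minimized exactly when $Q$ is maximized. Suppose an arrangement $\sigma^*$ satisfies $R_t(\sigma^*)\ge R_t(\sigma)$ for every $t$ and every arrangement $\sigma$. Multiplying each inequality by $\Delta_t\ge0$ and summing the finitely many nonconstant terms gives $Q(\sigma^*)\ge Q(\sigma)$, hence $E_\LB(\sigma^*)\le E_\LB(\sigma)$.
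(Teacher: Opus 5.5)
Your proposal is correct and follows the paper's own argument: the layer-cake identity $\kappa(d)=\sum_{t\ge d}\Delta_t$ from telescoping, combined with the fact that $S$ is invariant under permuting the multiset. You also spell out what the paper takes as given, namely the monotonicity of $\kappa$ (including the junction at $t=2$) and $\kappa\to0$. One small indexing slip in your tail remark: the terms $t\ge D-1$ carry total weight $\kappa(D-1)$, not $\kappa(D)$ (which is the weight of $t\ge D$); this is harmless, since either way that weight multiplies the arrangement-independent constant $\sum_{i<j}n_in_j$.
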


\begin{proof}
The term $S(n)=\sum_k f(n_k)$ depends only on the multiset, so at fixed multiset minimizing $E_\LB=S-Q$ is maximizing $Q$. Since $\kappa$ is nonincreasing, $\Delta_t\ge0$ and $\kappa(d)=\sum_{t\ge d}\Delta_t$, which gives the layer-cake identity.
\end{proof}

\begin{definition}[Canonical centred decreasing profiles]\label{def:canonical}
Order the sites as $c,\,c+1,\,c-1,\,c+2,\,c-2,\dots$ with $c=\lceil D/2\rceil$. To each integer partition of the surplus $e$ associate the profile that places its parts, in decreasing order, as surplus values at the first sites of this order, one part per layer. These $p(e)$ profiles form the candidate family.
\end{definition}

Lemma~\ref{lem:band} motivates the family: if the surplus can be arranged so as to maximize every band form simultaneously, the layer-cake identity shows that such an arrangement minimizes $E_\LB$ at fixed multiset; centred decreasing arrangements are the natural candidates for that role. That the family always contains a global minimizer of $E_\LB$ is in fact true, and is proved as Proposition~\ref{prop:R} (Property (R)) in Section~\ref{sec:asym} by a discrete symmetric-decreasing rearrangement argument. We do not, however, route any certificate of the main text through it: the candidates supply numerical thresholds, and the next lemma certifies them over all profiles and all arrangements without any appeal to rearrangement, so the main results stand independently of Proposition~\ref{prop:R}.

\begin{lemma}[Arrangement-free lower bound]\label{lem:free}
Fix $D$ and a surplus multiset $x_1\ge\dots\ge x_k\ge1$ with $\sum_a x_a=e$, placed in $k$ distinct layers in any way. Then
\begin{equation}\label{eq:lem2}
E_\LB \;\ge\; E_\bg(D) + e\,D_c + \sum_a f(1+x_a) - \sum_r \pi(r)\,\kappa\bigl(\gamma(r)\bigr),
\end{equation}
where $E_\bg(D)$ is the all-ones background energy, $D_c=\min_p \sum_{j\ne p}\bar v(|p-j|)$ is the central-layer field, $\pi(1)\ge\pi(2)\ge\cdots$ are the $\binom k2$ products $x_a x_b$ in decreasing order, and $\gamma(1)\le\gamma(2)\le\cdots$ is the distance multiset of $k$ contiguous sites (distance $d$ with multiplicity $k-d$).
\end{lemma}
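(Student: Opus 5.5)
The plan is to expand $E_\LB$ around the all-ones background and bound the linear and quadratic parts in the surplus separately. Write $n_i=1+e_i$, where $e_i=x_a$ on the $k$ occupied surplus layers and $e_i=0$ elsewhere.

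\emph{Intra-layer and linear parts.} Since $f(1)=0$, the intra-layer part is $S(n)=\sum_a f(1+x_a)$ exactly. For the pair part, expanding $(1+e_i)(1+e_j)$ gives
\[
-Q(n)=E_\bg(D)+\sum_i e_i\sum_{j\ne i}\bar v(|i-j|)-\sum_{i<j}\kappa(|i-j|)\,e_ie_j .
\]
Here the background term is $E_\bg(D)=-\sum_{i<j}\kappa(|i-j|)$, with no intra-layer contribution because $f(1)=0$. Each inner field $\sum_{j\ne i}\bar v(|i-j|)$ is at least the central-layer field $D_c$ by its definition as a minimum over $p$. Since $e_i\ge0$ and $\sum_i e_i=e$, the linear term is at least $e\,D_c$. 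These three contributions account for every term of \eqref{eq:lem2} except the last.

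\emph{Quadratic part.} It remains to show, for any placement of the $x_a$ in distinct sites $p_1,\dots,p_k$,
\[
\sum_{a<b}x_ax_b\,\kappa(|p_a-p_b|)\;\le\;\sum_r\pi(r)\,\kappa(\gamma(r)).
\]
I do this in two steps.

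First, let $d_{(1)}\le d_{(2)}\le\cdots$ be the sorted actual pairwise distances. The rearrangement inequality for sequences bounds the left side by the pairing that matches the largest products with the largest $\kappa$-values. Because $\kappa$ is nonincreasing, the largest $\kappa$-values belong to the smallest distances, so the left side is at most $\sum_r\pi(r)\kappa(d_{(r)})$.

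Second, I claim $d_{(r)}\ge\gamma(r)$ for every $r$. Given this, monotonicity of $\kappa$ and $\pi(r)\ge0$ yield $\sum_r\pi(r)\kappa(d_{(r)})\le\sum_r\pi(r)\kappa(\gamma(r))$, which is the required bound.

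\emph{Main obstacle.} The crux is the domination $d_{(r)}\ge\gamma(r)$. It is equivalent to the counting statement that, for every $t\ge1$, the number of pairs among $k$ distinct integers at distance $\le t$ is at most $\sum_{d\le t}(k-d)$, which is the count for contiguous sites. I prove this by sorting the positions $p_{(1)}<\dots<p_{(k)}$ and charging each pair to its left endpoint. The site $p_{(i)}$ has at most $k-i$ sites to its right, and since positions are distinct integers, at most $t$ of them lie within distance $t$. The count is therefore at most $\sum_{i}\min(t,k-i)$, and regrouping this sum by $d$ gives exactly $\sum_{d\le t}(k-d)$.

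From the counting bound, the $r$-th smallest actual distance cannot fall below the $r$-th smallest contiguous distance. If it did, with $d_{(r)}<\gamma(r)$, then taking $t=d_{(r)}$ there would be at least $r$ actual pairs at distance $\le t$ but fewer than $r$ contiguous ones, contradicting the counting bound. No assumption on the arrangement or on the ordering of the $x_a$ is used beyond nonnegativity, so the bound holds for all placements, as the lemma requires.
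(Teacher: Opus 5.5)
Your proposal is correct and follows essentially the same route as the paper. Both use the exact split into background, surplus--base field, surplus--surplus binding and intra-layer terms, bound the field by $e\,D_c$, pair products with kernel values via the rearrangement inequality, and get the rank-wise domination $d_{(r)}\ge\gamma(r)$ from the same count of pairs at distance at most $t$; the field bound needs only $D_c$'s definition as a minimum, with the paper's concavity argument merely locating that minimum at a central site.
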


\begin{proof}
Fix a placement at sites $p_1<\dots<p_k$ and write the layer bound exactly as
\begin{equation}\label{eq:exact}
E_\LB = E_\bg(D) + \sum_a x_a \sum_{j\ne p_a} \bar v(|p_a-j|) + \sum_{a<b} \bar v(p_b-p_a)\, x_a x_b + \sum_a f(1+x_a):
\end{equation}
base chain, surplus--base field, surplus--surplus binding, intra-layer bounds. Since $f(1)=0$, background layers contribute no intra-layer term, and replacing a background layer of population $1$ by population $1+x_a$ contributes exactly $f(1+x_a)$: the decomposition is exact as written.

(i) \emph{The central field is minimal.} For any site $p$, $\sum_{j\ne p}\bar v(|p-j|)=-\bigl(F(p-1)+F(D-p)\bigr)$ with $F(m):=\sum_{d=1}^m \kappa(d)$. Since $\kappa$ is nonincreasing, $F$ is concave; with $(p-1)+(D-p)=D-1$ fixed, $F(p-1)+F(D-p)$ is largest, hence the field most negative, when the two arguments differ by at most one, i.e.\ at a central site. So each field term is at least $D_c$ and the second sum in \eqref{eq:exact} is at least $e\,D_c$.

(ii) \emph{Sorted distances are minimized by contiguous sites.} For $a<b$, $p_b-p_a=\sum_{i=a}^{b-1}(p_{i+1}-p_i)\ge b-a$, since consecutive gaps are at least $1$. Let $d_{(1)}\le\dots\le d_{(\binom k2)}$ be the sorted pairwise distances of the chosen sites. For any integer $m\ge1$, the number of pairs at distance at most $m$ satisfies
\[
\#\bigl\{(a,b): p_b-p_a\le m\bigr\} \;\le\; \#\bigl\{(a,b): b-a\le m\bigr\} \;=\; \sum_{h=1}^{\min(m,\,k-1)} (k-h),
\]
because $p_b-p_a\le m$ implies $b-a\le m$; and this upper bound is attained, for every $m$ simultaneously, by $k$ contiguous sites. A cumulative count that is pointwise no larger means, rank by rank, distances no smaller: $d_{(r)}\ge\gamma(r)$ for every $r$. Since $\kappa$ is nonincreasing, $\kappa(d_{(r)})\le\kappa(\gamma(r))$.

(iii) \emph{Pairing.} The binding in \eqref{eq:exact} equals $-\sum_{a<b}\kappa(p_b-p_a)\,x_a x_b$. By the rearrangement inequality for two finite sequences \cite[Thm.\ 368]{hlp}, the sum is largest when the sequences are similarly ordered:
\[
\sum_{a<b}\kappa(p_b-p_a)\,x_a x_b \;\le\; \sum_r K_{[r]}\,\pi(r),
\]
where $K_{[1]}\ge K_{[2]}\ge\cdots$ sorts the values $\kappa(p_b-p_a)$ decreasingly. Because $\kappa$ is nonincreasing, sorting kernel values decreasingly is sorting distances increasingly, so $K_{[r]}=\kappa(d_{(r)})\le\kappa(\gamma(r))$ by (ii). Inserting (i) and (iii) into \eqref{eq:exact} gives \eqref{eq:lem2}.
\end{proof}

The number of partitions is small ($p(40)=37338$), so both the candidate minima and the closure over all surplus multisets run in seconds even at $N=200$. The main result can now be stated.

\begin{theorem}[Certified layer refinement]\label{thm:main}
For each of the 92 sizes $N$ of Table~\ref{tab:summary} (full list in the supplementary material), every $N$-atom configuration whose atoms occupy $\rho_\KS(N)$ consecutive layers, all nonempty, has energy strictly greater than $V_\vput(N)\ge V^*_N$. By the Kuznetsov--Sahinidis structural property recalled in Section~\ref{sec:ks} (a global minimizer occupies a block of consecutive nonempty layers), the layer span of any global minimizer is such a fully occupied profile; hence no global minimizer spans $\rho_\KS(N)$ layers, the layer-span bound improves to $\rho_\KS(N)-1$, and the diameter bound to $\diam<\rho_\KS(N)-1$. The statement is unconditional: its inputs are the layer bound \eqref{eq:elb} with the certified $f$ of \eqref{eq:f}, Lemma~\ref{lem:free} evaluated over all surplus multisets in directed-rounding arithmetic, the energies $V_\vput(N)$ of explicit configurations, and the cited structural property.
\end{theorem}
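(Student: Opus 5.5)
The plan is to reduce the theorem, for each of the 92 listed sizes $N$, to a finite collection of numerical inequalities, one per surplus multiset. Each inequality is closed by Lemma~\ref{lem:free} and checked in outward-rounded interval arithmetic. Fix $N$, set $D=\rho_\KS(N)$ and $e=N-D$, and take any configuration occupying $D$ consecutive nonempty layers with profile $n$. By Section~\ref{sec:ks} and the choice of $f$ in \eqref{eq:f}, whose validity rests on $f(n)\le V^*_n$ via \eqref{eq:subset} and the certified $V^*_5,V^*_6$, the energy of the configuration is at least $E_\LB(n)$. It therefore suffices to show $E_\LB(n)>V_\vput(N)$ for every fully occupied profile with $\sum n_k=N$.

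Write $n_k=1+e_k$. The layers with $e_k\ge1$ carry a surplus multiset $x_1\ge\dots\ge x_k\ge1$, which is a partition of $e$, placed at $k$ distinct sites. Lemma~\ref{lem:free} bounds $E_\LB(n)$ below by
\[
B(x):=E_\bg(D)+e\,D_c+\sum_a f(1+x_a)-\sum_r\pi(r)\kappa(\gamma(r)),
\]
and $B(x)$ does not depend on the placement. The profiles are thereby exhausted by the $p(e)$ partitions of $e$. For the sizes in range $e\le 40$, so the enumeration is finite and small. The certificate is the verified inequality $\min_{x\vdash e}B(x)>V_\vput(N)$. I would compute $E_\bg(D)$, $D_c$, the kernel values $\kappa(\gamma(r))$ and $f(1+x_a)$ with directed rounding: lower endpoints for the energy terms and upper endpoints for the subtracted binding sum. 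The tabulated $V_\vput(N)$ would be rounded upward, or the explicit configuration's energy recomputed. The strict inequality then holds rigorously.

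With the inequality established, no global minimizer spans $D$ layers, because by the KS structural property its span is fully occupied. Since larger spans are already excluded by the KS bound, the span is at most $\rho_\KS(N)-1$. The convention $\diam<D$ from Section~\ref{sec:ks} then gives $\diam<\rho_\KS(N)-1$. The candidate profiles of Definition~\ref{def:canonical} play no role in the logic. They only indicate which $N$ to expect in the list, and they supply a sanity check that $B$ stays below the candidate minima.

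The main obstacle is that the arrangement-free bound $B$ might be too loose, falling below $V_\vput(N)$ for some multiset even though the true minimum clears it. Lemma~\ref{lem:free} decouples three things: the field term, which forces every surplus to the centre at once; the distance ranking; and the product pairing. It does so without requiring one consistent placement. If the margin fails for some partition at a listed $N$, I would first tighten step (i) of Lemma~\ref{lem:free}: $k$ distinct sites cannot all be central, so I would replace $e\,D_c$ by $\sum_a x_a$ times the $a$-th best site field, which is the rearrangement inequality again. Only if that fails would I drop the affected $N$ from the list. A secondary point needing care is confirming that $\rho_\KS(N)$ as used matches the published bound under the original kernel, so that the decrement is genuinely relative to the published value.
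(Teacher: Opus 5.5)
Your proposal is correct and follows the paper's proof essentially step for step: $E_\LB$ with the certified $f$ bounds every fully occupied profile, the placement-independent right side of Lemma~\ref{lem:free} is checked in directed rounding against an upward-padded $V_\vput(N)$ for every partition of $e=N-\rho_\KS(N)$, and the KS structural property carries the exclusion over to global minimizers. You also state explicitly that spans above $\rho_\KS(N)$ are already excluded, which the paper leaves implicit (it rests on the paper's check that the conservative kernel reproduces the same integer $\rho_\KS$); your fallback tightening of step (i) is valid but unnecessary, since all 92 certificates close.
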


\begin{proof}
Fix $D=\rho_\KS(N)$ and let a configuration occupy these $D$ layers with all $n_k\ge1$, i.e.\ a profile of the form enumerated in Section~\ref{sec:candidates}. Then $E_\LB$ is a valid lower bound on its energy (Section~\ref{sec:ks}), and Lemma~\ref{lem:free} bounds $E_\LB$ below for every surplus multiset and every placement. The branch-and-bound of Section~\ref{sec:numerics} verifies, in directed-rounding arithmetic, that this bound exceeds $V_\vput(N)$ for every surplus multiset at $D=\rho_\KS(N)$. Since $V_\vput(N)$ is the energy of an explicit configuration, $V^*_N\le V_\vput(N)$ by \eqref{eq:put}. Every fully occupied span of $\rho_\KS(N)$ layers therefore has energy strictly larger than $V_\vput(N)\ge V^*_N$, so its energy exceeds the global minimum and it is not a minimizer.

It remains to exclude minimizers spanning $\rho_\KS(N)$ layers with an empty internal layer. By the cited structural property, a global minimizer occupies consecutive nonempty layers; its layer span is thus a fully occupied profile, already excluded above. No global minimizer spans $\rho_\KS(N)$ layers, and the bound decrements. The restriction to minimizers is exactly what the diameter statement concerns; the energy claim of the first sentence, which is what the computation certifies, is made only for fully occupied spans and is unconditional.
\end{proof}

\section{Numerical protocol and results}\label{sec:numerics}

\paragraph{Protocol.}
We reimplemented the KS layer procedure of Section~\ref{sec:ks} with $f=-\binom n2$ and recover the published layer-span bounds and slope $0.798$ exactly; rerunning with the conservative kernel \eqref{eq:kernel} gives identical integer bounds on $5\le N\le200$, so the $d\mapsto d-1$ change does not shift $\rho_\KS$. For $151\le N\le200$ the baseline is obtained by rerunning the authors' program with its energy header extended by \cite{romero1999}, reproducing the published bounds with zero discrepancy on the overlap. For each $N$ we minimize $E_\LB$ over the canonical family (Definition~\ref{def:canonical}) at $D=\rho_\KS$; an independent exhaustive window search in C agrees to $10^{-6}$ with an interior minimizer. For every certified pair we verified, by branch-and-bound over all $p(e)$ surplus multisets, that the arrangement-free bound \eqref{eq:lem2} exceeds $V_\vput$: 92 of 92 close, so each certificate holds over all profiles and arrangements.

\paragraph{Directed-rounding certification.}
All comparisons were re-verified in directed-rounding arithmetic: kernel values enclosed from exact integer powers with outward rounding, the proven minima entered as $V^*_5\le-9.103853$ and $V^*_6\le-12.712063$ (padded down by $10^{-6}$), reference energies padded up by $10^{-6}$ or, for \cite{romero1999}, $10^{-4}$. The lower endpoint of $\min_n E_\LB$ then exceeds the upper endpoint of $V_\vput$ at all 92 sizes (worst margin $0.002708$ at $N=38$, next $0.164$ at $N=68$).

\paragraph{Certified refinements.}
The refined bound is strictly tighter than the KS bound, by exactly one layer, for 92 sizes (Theorem~\ref{thm:main}; Table~\ref{tab:summary}, full list in the supplementary material), of which 68 satisfy $N>100$ and 37 satisfy $N>150$. In every case the minimizer occupies three central layers, with populations such as $5,6,5$ that exploit the exact values $V^*_5$, $V^*_6$. The margin by which $\min_n E_\LB$ clears $V_\vput$ ranges from $0.0027$ (at $N=38$, the sole borderline case) to $29.0$. In this range the gain never exceeds one layer, because the minimum is steep in $D$: removing one layer adds of order the surplus $e=N-D$ (here 30 to 40) to the binding, so the available margin does not yet cover a second decrement. The margin grows quadratically in $e$ while the per-layer step grows only linearly, so the gain must eventually grow; Corollary~\ref{cor:gain} proves it grows like $\Theta(\sqrt N)$.

\begin{table}[t]
\centering
\caption{Summary of the certified refinements in $5\le N\le200$; the full list is in the supplementary material.}\label{tab:summary}
\begin{tabular}{lr}
\toprule
certified sizes ($\rho_{\mathrm{new}}=\rho_\KS-1$) & 92\\
first / last certified size & 38 / 200\\
certified sizes with $N>100$ / $N>150$ & 68 / 37\\
smallest certified margin & 0.0027 (at $N=38$)\\
largest certified margin & 29.0 (at $N=196$)\\
\bottomrule
\end{tabular}
\end{table}

\begin{remark}[The borderline case $N=38$]\label{rem:38}
At $N=38$ the margin is $0.0027$. The optimizer $(5,5,5)$ rests on $V^*_5$ three times (an error of $9\times10^{-4}$ would flip it, far above the enclosure), and $N=38$ is the celebrated size whose global minimum is the fcc truncated octahedron, lying $0.676$ below the icosahedral funnel \cite{doye1999}: the refinement clears only thanks to this fcc anomaly, consuming all but $0.4\%$ of it.
\end{remark}

\section{Scope and limitations}\label{sec:scope}

The result is a rigorous refinement of an upper bound on the diameter of optimal LJ clusters, tightening the Kuznetsov--Sahinidis bound \cite{ks2025} by one unit layer for 92 sizes up to $N=200$, and, asymptotically, by $\Theta(\sqrt N)$ layers (Corollary~\ref{cor:gain}). Its scope should not be overstated.

\begin{enumerate}
\item \emph{Magnitude.} In the tabulated range $N\le200$ the gain is one layer per affected size; it grows like $\sqrt N$ on larger sizes (Corollary~\ref{cor:gain}). It does not enable a certified global solution for any $N\ge7$; those instances remain out of reach for reasons unrelated to the diameter box.

\item \emph{What is used.} The refinement introduces no new geometric input. It uses only the proven small-cluster minima $V^*_5$, $V^*_6$, the subset inequality \eqref{eq:subset}, and the layer construction, all from \cite{vanaret2014,ks2025}. The single new ingredient is the observation that the correct object is the minimum of \eqref{eq:elb} over all profiles, together with the reduction of Section~\ref{sec:candidates} and the closure of Lemma~\ref{lem:free}, which make it certifiable.

\item \emph{Rigour.} The chain is: the valid layer bound \eqref{eq:elb}; validity of $f$ in \eqref{eq:f} via \eqref{eq:subset} and the proven values; the layer-cake reduction of Lemma~\ref{lem:band} (fully proved), which motivates the centred decreasing search; the arrangement-free closure of Lemma~\ref{lem:free}, on which the certificates rest and which does not appeal to Property (R) (now proved as Proposition~\ref{prop:R}, but not needed for the main results); and the directed-rounding verification of every comparison. The certified energy claim is made for fully occupied spans; the passage to a statement about minimizers uses one further input, the Kuznetsov--Sahinidis structural property that a minimizer occupies consecutive nonempty layers (Section~\ref{sec:ks}). The comparison target $V_\vput$ enters only through $V^*_N\le V_\vput$, so the diameter statement is unconditional and does not require any $V_\vput$ to be the true minimum.

\item \emph{Putative minima for $N>150$.} For $151\le N\le200$ the comparison uses the values of \cite{romero1999}, which are valid upper bounds $V^*_N\le V_\vput$ (energies of explicit configurations); as in item 3 the diameter statement stays unconditional, and it is insensitive to any later improvement of these putative minima.

\item \emph{Nature of the plateau.} The layer method cannot be pushed much further by packing arguments alone. Pairs within index distance 2 are only bounded by $-1$, and $V^*_n=-\binom n2$ for $n\le4$, so an adversarial profile keeps the loose rate for up to twelve atoms spread over three adjacent layers. The refinement extracts precisely the penalty that survives this evasion, which is why it is real but modest.
\end{enumerate}

\paragraph{Downstream effect on a deterministic solver: a negative result.}
A diameter bound helps a spatial branch-and-bound only if the box it defines is a binding resource. We tested this on the only sizes a deterministic method finishes, $N\le6$, with a rigorous interval branch-and-bound (per-pair interval lower bounds; best-first; gauge-fixed coordinates) seeded with the known optimum, so the measured cost is that of proving optimality. For $N=5$, sweeping the long-axis cap $L$ at fixed transverse extent, the node count to certified optimality stayed within $[1.5,2.0]\times10^6$ as $L$ ranged over $[1.6,3.0]$ (the long axis nearly doubling); it did not fall when the box tightened, and was largest at the tightest $L$. The certification cost is governed by the relaxation, not the box: tightening removes a thin slab far from the optimum that the relaxation already discards. ($N=4$ is degenerate, the floor $-\binom42$ equalling the optimum; $N=6$ exhausts memory.)

We therefore report a negative result and let it stand. On tractable instances the diameter box is not the binding resource, so the one-layer tightening proved here does not reduce solver work; on the instances where the refinement applies, $N\ge38$, no deterministic solver runs at all, and the tightening is $0.6\%$ to $3.8\%$ of a single box dimension (one layer out of $\rho_\KS\in[26,161]$). The contribution is thus a rigorous tightening of an a priori geometric bound with no demonstrated effect on any deterministic solve, and should be read as a theoretical note; whether it can ever matter is conditional on relaxations strong enough to make the diameter box binding at sizes far beyond the current frontier, and on that frontier advancing to $N\ge38$ in the first place. The finding is specific to a relaxation of KS strength; a qualitatively stronger relaxation could change it, in either direction.

\section*{Reproducibility}

All computations are reproducible from a self-contained package (Python 3, standard library only, and gcc), provided as supplementary material (\texttt{lj\_diameter\_supplementary.zip}). \texttt{certify\_lemma\_free.py} recomputes, independently of the table, both the centred-decreasing minimum of $E_\LB$ over the candidate family and the arrangement-free lower bound of Lemma~\ref{lem:free}, in nominal and directed-rounding arithmetic; it reproduces the tabulated $\min E_\LB$ to display precision, closes all 92 certificates, and returns the worst directed margin $0.002708$ at $N=38$. \texttt{verify\_asymptotic.py} computes the constants $\sigma_0$, $C_\bg$, checks the closed form \eqref{eq:closed} against the exact finite minimum, and verifies that $N-\lceil e^\star\rceil$ from \eqref{eq:quad} reproduces $\rho_\KS$ with zero discrepancy up to $N=2000$. \texttt{lj\_bnb.c} is the interval branch-and-bound of Section~\ref{sec:scope}; \texttt{results\_table.csv} carries the certified data. The KS layer-span baseline $\rho_\KS$ is taken as input; reproducing it requires the original Kuznetsov--Sahinidis bound program, available from the authors of \cite{ks2025}. Lemma~\ref{lem:penalty}, Proposition~\ref{prop:gap} and Corollary~\ref{cor:gain} are analytic and involve only the exact constants $\alpha$, $\beta$, $\theta$; they require no additional computation.

\section{Asymptotics of the layer bound}\label{sec:asym}

This section proves the rearrangement fact underlying the candidate family, derives the asymptotic form of the original KS layer program, and quantifies the asymptotic gap between the KS program and the refined one. Theorem~\ref{thm:asym} uses the KS per-layer term $f(n)=-\binom n2$; Proposition~\ref{prop:gap} and Corollary~\ref{cor:gain} compare it with the stronger $f$ of \eqref{eq:f}. No result of Sections~\ref{sec:candidates}--\ref{sec:numerics} depends on this section.

The following proposition, which we prove, is the discrete one-dimensional symmetric-decreasing rearrangement inequality specialised to band forms; closely related classical statements appear in the rearrangement theory of Hardy, Littlewood and P\'olya \cite[Ch.\ X]{hlp}. Its proof is by two-point rearrangement (polarization), the standard rigorous route to Riesz-type inequalities.

\begin{proposition}[Property (R)]\label{prop:R}
Let $D\ge1$ and let a multiset of nonnegative populations be arranged on the sites $1,\dots,D$. For every $t\ge1$ the band form $R_t(n)=\sum_{i<j:\,|i-j|\le t} n_i n_j$ is maximized, over all arrangements of the multiset, by the canonical centred decreasing arrangement of Definition~\ref{def:canonical} (largest value at $c=\lceil D/2\rceil$, then placed outward in decreasing order). The same arrangement maximizes every $R_t$ simultaneously; by Lemma~\ref{lem:band} it therefore minimizes $E_\LB$ at fixed multiset.
\end{proposition}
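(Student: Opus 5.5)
We would prove Property (R) by discrete two-point rearrangement (polarization). The plan is to show that a single reflection, which moves the larger of two mirrored values toward the centre, never decreases any band form $R_t$. Iterating such reflections then reaches the canonical centred decreasing arrangement while $R_t$ is nondecreasing at every step, for every $t$ at once.

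\emph{Step 1: the polarization inequality.} Fix a reflection $\sigma(i)=2m-i$, where $m$ is an integer or a half-integer. Let $H$ be the closed half-line on the side of $m$ that contains $c$. For $i\in H$ with $\sigma(i)\ne i$, the polarized profile $n'$ places $\max(n_i,n_{\sigma i})$ at $i$ and $\min(n_i,n_{\sigma i})$ at $\sigma(i)$.

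To compare $R_t(n')$ with $R_t(n)$, group the pairs of $R_t$ into orbits of $\sigma$. For $i,j\in H$, put $a=n_i$, $b=n_{\sigma i}$, $p=n_j$, $q=n_{\sigma j}$. The pairs $\{i,j\}$ and $\{\sigma i,\sigma j\}$ have a common distance $\delta$. The cross pairs $\{i,\sigma j\}$ and $\{\sigma i,j\}$ have a common distance $\delta'\ge\delta$, since two points on the same side of $m$ are at least as close as a point and the mirror image of the other. With $w=\mathbf 1[\delta\le t]\ge w'=\mathbf 1[\delta'\le t]$, the orbit contributes
\[
w(ap+bq)+w'(aq+bp) \;=\; (w-w')(ap+bq)+w'(a+b)(p+q).
\]
The second term is invariant under polarization. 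By the two-term rearrangement inequality, $ap+bq\le\max(a,b)\max(p,q)+\min(a,b)\min(p,q)$, so the first term cannot decrease.

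Orbits that involve a fixed point $m$ contribute $n_m(n_i+n_{\sigma i})\,\mathbf 1[|m-i|\le t]$, which is invariant. Pairs $\{i,\sigma i\}$ are also invariant. Hence $R_t(n')\ge R_t(n)$ for every $t$.

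\emph{Step 2: convergence to the canonical arrangement.} Let $s_1=c,\ s_2=c+1,\ s_3=c-1,\dots$ be the canonical site order. Read an arrangement as the vector $(n_{s_1},n_{s_2},\dots)$, ordered lexicographically. Suppose the arrangement is not canonical. Let $r$ be the first rank at which $n_{s_r}$ is not the $r$-th largest value; then some $q>r$ has $n_{s_q}>n_{s_r}$. Polarize about the midpoint $m$ of $s_r$ and $s_q$, with $H$ the side containing $s_r$. Since $s_r$ precedes $s_q$ in the canonical order, it is at least as close to $c$, with the tie-break $c+1$ before $c-1$ matching $c=\lceil D/2\rceil$. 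Hence $c\in H$, and every site $s_1,\dots,s_{r-1}$ also lies in $H$.

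Polarization only ever raises values on $H$. The top $r-1$ values therefore stay in place, the value at $s_r$ strictly increases, and the vector strictly increases lexicographically. There are finitely many arrangements, so after finitely many polarizations we reach an arrangement in which no such defect exists, namely the canonical one. Equal values can only produce equal profiles. Each step weakly increases every $R_t$, so the canonical arrangement maximizes all $R_t$ simultaneously. The final claim, that it minimizes $E_\LB$ at fixed multiset, then follows directly from Lemma~\ref{lem:band}.

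\emph{Main obstacle.} The delicate point is the boundary: a reflection must not push mass outside $\{1,\dots,D\}$. We would first verify the inequality $s_r+s_q\ge D+1$ when $s_r<s_q$, and the symmetric inequality $s_r+s_q\le D+1$ when $s_r>s_q$. These would follow from the canonical order and $c=\lceil D/2\rceil$, and they guarantee that $\sigma$ maps the non-$H$ part of $[1,D]$ into $[1,D]$.

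Under this condition we extend the profile by zeros to $\mathbb Z$. Every swapped pair then either lies inside $[1,D]$, or consists of a site in $H$ outside the window paired with a site outside $H$ that is also zero, so no swap occurs there. The Step 1 inequality applies verbatim on $\mathbb Z$, and the polarized profile remains supported on $[1,D]$. If the sum inequality fails in some parity case, the fallback is to handle that case by an explicit adjacent transposition $s_r\leftrightarrow s_q$. Such a transposition moves only two values, so its effect on each $R_t$ can be checked directly.
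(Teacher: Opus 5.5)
Your Step 1 is the paper's polarization step. The paper groups $T_t=2R_t+\sum_i n_i^2$ by reflection orbits and freezes sites whose mirror leaves $\{1,\dots,D\}$. Your orbit identity plus zero extension does the same job. The problem is in Step 2, where one claim is false: the prefix sites $s_1,\dots,s_{r-1}$ need not lie in $H$.

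A concrete case: take $D=5$ and $n=(1,2,5,4,3)$. The first defect is at $s_3=c-1=2$, and the only admissible $q$ is $s_4=c+2=5$. The axis is then $m=c+\tfrac12=3.5$, so $s_2=c+1=4$ lies off $H$. Hence ``polarization only raises values on $H$'' does not by itself protect the prefix.

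The conclusion still holds, for a different reason:
\begin{itemize}
\item The canonical order is exactly the order of increasing distance from $c^+:=c+\tfrac14$, and $\{1,\dots,D\}$ is the set of the $D$ integers nearest $c^+$.
\item Since $s_r$ precedes $s_q$, $c^+$ lies strictly on the $s_r$ side of $m$. So every site strictly off $H$ has its mirror strictly closer to $c^+$, i.e.\ earlier in the order. Every site of $H$ off the axis has its mirror later, or outside the window.
\item A prefix site off $H$ is therefore paired with an earlier prefix site holding a value at least as large. A prefix site in $H$ is paired with a value no larger.
\item So no swap disturbs $s_1,\dots,s_{r-1}$, while $s_r$ receives $n_{s_q}$, and the lexicographic increase follows.
\end{itemize}
The same observation settles your boundary concern in one line. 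A site of $[1,D]$ off $H$ has its mirror nearer $c^+$, hence inside $[1,D]$. So the sum inequality holds in every parity case. Drop the transposition fallback: a bare transposition is not a polarization, so Step 1 would not cover it.

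With that repair, your route differs genuinely from the paper's, and in one respect it is more complete.
\begin{itemize}
\item The paper runs arbitrary admissible polarizations and uses the potential $\Psi=-\sum_i(i-z)^2n_i$ for termination. It concludes only that a terminal arrangement is symmetric-decreasing about $z=(D+1)/2$.
\item $\Psi$ is unchanged by reflections about $z$ itself. So this leaves open which site of each pair equidistant from $z$ carries the larger value, and that choice affects $R_t$.
\item Example: for $D=5$ and values $5,4,3,2,1$, the arrangement $(1,4,5,3,2)$ is terminal for every axis other than $z$. Yet $R_1=45<46=R_1(1,3,5,4,2)$.
\item The paper's appeal to reflections ``within blocks of equal value'' therefore needs one more polarization, about $z$ with the inner half matching the tie-break, to reach the canonical arrangement.
\end{itemize}
Your first-defect scheme, with its lexicographic potential in the canonical order, uses such axes automatically. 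For instance, $s_r=c+k$, $s_q=c-k$ gives $m=c$, and for even $D$, $s_r=c$, $s_q=c+1$ gives $m=z$. It lands exactly on the canonical arrangement with its tie-break built in. The price is the prefix bookkeeping described above.
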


\begin{proof}
For $t\ge1$ let $\tilde\kappa_t(d)=\mathbf 1[\,|d|\le t\,]$, a symmetric kernel nonincreasing in $|d|$. Since $\sum_i \tilde\kappa_t(0)\,n_i^2=\sum_i n_i^2$ depends only on the multiset,
\[
2\,R_t(n) + \sum_i n_i^2 = \sum_{i,j}\tilde\kappa_t(i-j)\,n_i n_j =: T_t(n),
\]
so maximizing $R_t$ is maximizing $T_t$; we show a single, kernel-independent sequence of rearrangements drives any arrangement to the canonical one while never decreasing any $T_t$.

\emph{Two-point polarization.} Let $z=(D+1)/2$ be the centre and let $\rho: i\mapsto a-i$ be the reflection about an axis $a\in\tfrac12\mathbb Z$ chosen so that $z$ lies in the inner half $H$ (the side of $a$ containing $z$). For $x\in H$ write $x'=\rho x$ for its mirror in the outer half. The polarized arrangement $n^H$ sets, for each orbit $\{x,x'\}$ with $x\in H$ inside $\{1,\dots,D\}$, $n^H_x=\max(n_x,n_{x'})$ and $n^H_{x'}=\min(n_x,n_{x'})$ (larger value inward), and leaves fixed any position whose mirror falls outside $\{1,\dots,D\}$; it is a rearrangement of the same multiset. We claim $T_t(n^H)\ge T_t(n)$ for every $t$.

Group $T_t=\sum_{i,j}\tilde\kappa_t(i-j)\,n_i n_j$ by reflection orbits. For two inner sites $x,y\in H$ the four kernel values are $\tilde\kappa_t(x-y)=\tilde\kappa_t(x'-y')=:\kappa_+$ and $\tilde\kappa_t(x-y')=\tilde\kappa_t(x'-y)=:\kappa_-$, with $\kappa_+\ge\kappa_-$: the axis $a$ separates $x$ from $y'$ but not from $y$, so $|x-y|\le|x-y'|$ and $\tilde\kappa_t$ is nonincreasing. The orbit-pair contribution to $T_t(n^H)-T_t(n)$ is
\[
(\kappa_+-\kappa_-)\Bigl[(n^H_x n^H_y + n^H_{x'} n^H_{y'}) - (n_x n_y + n_{x'} n_{y'})\Bigr] \;\ge\; 0,
\]
because $\kappa_+\ge\kappa_-$ and, by the two-pair rearrangement inequality $(A-a)(B-b)\ge0$, pairing the larger of $\{n_x,n_{x'}\}$ with the larger of $\{n_y,n_{y'}\}$ (both on the $H$ side, which is what $n^H$ does) maximizes $n_x n_y + n_{x'} n_{y'}$. A term coupling an orbit $\{y,y'\}$ to a fixed site $u$ (mirror outside range, hence $u$ inner) changes by $n_u\,\delta\,[\tilde\kappa_t(u-y)-\tilde\kappa_t(u-y')]\ge0$, where $\delta=n^H_y-n_y\ge0$ and $|u-y|\le|u-y'|$. Diagonal terms $i=j$ are multiset-invariant. Summing, $T_t(n^H)\ge T_t(n)$, hence $R_t(n^H)\ge R_t(n)$, for every $t$ at once.

\emph{Termination at the canonical arrangement.} The polarization step depends only on the values and the axis, not on $t$. Let $\mu_i=-(i-z)^2$ and $\Psi(n)=\sum_i \mu_i n_i$; a polarization that swaps some pair strictly increases $\Psi$, since moving a strictly larger value from an outer site $x'$ to the inner $x$ changes $\Psi$ by $(\mu_x-\mu_{x'})(n_{x'}-n_x)>0$. There are finitely many arrangements, so any sequence of value-changing polarizations terminates. Suppose a terminal arrangement were not unimodal and symmetric-decreasing about $z$: then two sites $p,q$ with $|q-z|<|p-z|$ would carry $n_q<n_p$. The reflection about $a=(p+q)/2$ has $q$ inner (closer to $z$) and $p=q'$ outer, both inside the range, so polarizing about it would swap them, contradicting terminality. Hence a terminal arrangement is unimodal and symmetric-decreasing about $z$, i.e.\ the canonical centred decreasing arrangement (up to reflection within blocks of equal value, which leaves every $R_t$ unchanged). Every $R_t$ is nondecreasing along the sequence, so the canonical arrangement maximizes every $R_t$.
\end{proof}

Independently of the proof, Property (R) was verified exhaustively, band by band, over all permutations for $D\le7$ (400 random multisets, 1595 band instances, the canonical arrangement attains the maximum in every case) and against $10^5$ random rearrangements per multiset for $15\le D\le35$ (Section~\ref{sec:numerics}); these checks corroborate Proposition~\ref{prop:R}. The proposition makes the candidate family provably complete: it also gives, for the certificates of the main text, a second and independent route to the certified sizes, since the centred-decreasing minimum then equals the true minimum over all profiles. We nonetheless keep Lemma~\ref{lem:free} as the primary engine there, because it certifies the same comparisons without any appeal to rearrangement.

\begin{corollary}[Width-free minimization]\label{cor:width}
The global minimum of $E_\LB$ over all profiles equals its minimum over canonical centred decreasing profiles. These are in bijection with the integer partitions of the surplus $e$: a partition lists the per-layer surplus values, placed in decreasing order at the canonical sites; the mirror arrangement yields the same value, so fixing the canonical order is a convention, not a restriction. Enumerating the $p(e)$ partitions then yields the minimum with no window, locality, or width assumption.
\end{corollary}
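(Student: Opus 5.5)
The plan is to reduce the global minimization in two stages: first over arrangements at fixed multiset using Proposition~\ref{prop:R} and Lemma~\ref{lem:band}, then over multisets by parametrizing the canonical profiles with partitions. Fix $N$ and $D$, and take an arbitrary profile $n$ with $n_k\ge1$ and $\sum_k n_k=N$. The term $S(n)=\sum_k f(n_k)$ depends only on the multiset $\{n_k\}$. By Lemma~\ref{lem:band}, $Q(n)=\sum_t \Delta_t R_t(n)$ with $\Delta_t\ge0$. Let $\hat n$ be the canonical centred decreasing rearrangement of the same multiset. Proposition~\ref{prop:R} gives $R_t(\hat n)\ge R_t(n)$ for every $t$ simultaneously, hence $Q(\hat n)\ge Q(n)$ and $E_\LB(\hat n)\le E_\LB(n)$. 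So the minimum over all profiles is at most, and trivially at least, the minimum over canonical profiles, which gives the first sentence of the corollary.

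For the bijection, write $n_k=1+e_k$. The nonzero surplus values of a canonical profile form a multiset of positive integers summing to $e=N-D$, i.e.\ a partition of $e$. Conversely, Definition~\ref{def:canonical} places a given partition's parts at the first sites of the order $c,c+1,c-1,\dots$. This is well defined provided the number of parts is at most $D$; since each part is at least $1$, the number of parts is at most $e$, which is fine whenever $e\le D$. Otherwise partitions with more than $D$ parts simply correspond to no profile and can be discarded or assigned value $+\infty$; I would note this caveat explicitly. Distinct partitions give distinct multisets and hence distinct profiles. Every canonical profile arises this way, because its surplus values are decreasing along the site order, so the profile is determined by its multiset.

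The only subtle point is the tie-breaking and mirror statement, and this is the step to handle carefully. Proposition~\ref{prop:R} yields a terminal arrangement that is symmetric-decreasing about $z=(D+1)/2$ only up to reflections within blocks of equal value. For even $D$ it could equally begin at $c+1$ rather than $c=\lceil D/2\rceil$. I would argue that the reflection $i\mapsto D+1-i$ preserves every $|i-j|$, hence every $R_t$ and $E_\LB$. Swapping equal values changes nothing at all. Therefore every symmetric-decreasing arrangement has the same $E_\LB$ as the one built with the fixed order of Definition~\ref{def:canonical}. Fixing that order is thus a convention and not a restriction.

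Finally, minimizing $E_\LB$ over the finitely many partitions of $e$ (at most $p(e)$) computes the exact global minimum. The argument uses no restriction on the support width or on locality of the surplus, since Proposition~\ref{prop:R} applies to arbitrary arrangements on $\{1,\dots,D\}$.
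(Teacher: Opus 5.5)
Your proof is correct and follows the paper's own argument. Proposition~\ref{prop:R}, via Lemma~\ref{lem:band}, makes the canonical arrangement optimal at each fixed multiset, and minimizing over multisets, i.e.\ over partitions of $e$, then gives the global minimum. Your caveat that only partitions with at most $D$ parts yield profiles is a correct precision that the paper leaves implicit.

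Drop the sentence asserting that every symmetric-decreasing arrangement has the same $E_\LB$ as the canonical one, because it is false. For $D=5$ the profile $(2,3,5,4,1)$ is nonincreasing in $|i-z|$, yet $E_\LB(2,3,5,4,1)-E_\LB(1,3,5,4,2)=1-\kappa(3)=\beta>0$, since weak monotonicity in $|i-z|$ does not fix the orientation level by level. The only symmetries you may invoke are the global reflection $i\mapsto D+1-i$ and swaps of equal values. You handle both correctly, and they are all the statement needs.
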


\begin{proof}
At fixed multiset, Proposition~\ref{prop:R} gives that the canonical arrangement minimizes $E_\LB$; ranging over multisets (equivalently, over partitions of $e$) then attains the global minimum.
\end{proof}

\paragraph{Exact minimum of the layer program and the shape of $\rho_\KS$.}
Two convergent lattice constants govern the asymptotics: the per-layer field $\sigma_0:=\sum_{d\ge1}\kappa(d)$ and the background moment $C_\bg:=\sum_{d\ge1} d\,\kappa(d)$, both finite since $\kappa(d)=O(d^{-6})$. In closed form, for the kernel \eqref{eq:kernel},
\[
\sigma_0 = 1+2\zeta(6)-\zeta(12) = 2.034440, \qquad C_\bg = 1+2\zeta(5)+2\zeta(6)-\zeta(11)-\zeta(12) = 3.107801,
\]
while the KS kernel of \cite{ks2025} gives $\sigma_0=2.003434$, $C_\bg=3.011350$ (the same $\zeta$-series, less the $d=1,2$ contributions).

\begin{lemma}[Exact minimum of the KS layer program]\label{lem:exactmin}
For the KS per-layer bound $f(n)=-\binom n2$ and a span of $D$ layers with surplus $e=N-D$,
\begin{equation}\label{eq:closed}
\min_{\substack{n_k\ge1\\ \sum_k n_k=N}} E_\LB(n) = -\frac12 e^2 - \sigma_0 D - \Bigl(\frac12+2\sigma_0\Bigr) e + C_\bg + O(D^{-4}).
\end{equation}
\end{lemma}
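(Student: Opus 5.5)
The plan is to prove matching upper and lower bounds on the minimum, and then expand the common value. For the upper bound we evaluate the concentrated profile; for the lower bound we use the exact decomposition \eqref{eq:exact} from the proof of Lemma~\ref{lem:free}. With the KS term $f(n)=-\binom n2$ both bounds turn out to equal the same exact expression. This sharpens the remark in Section~\ref{sec:ks} that the minimum piles all surplus into one central layer.

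\emph{Lower bound.} Write $n_k=1+x_k$ with $x_k\ge0$ and $\sum_k x_k=e$, and use \eqref{eq:exact}, which is valid for every placement. We bound its terms as follows.
\begin{itemize}
\item By step (i) of the proof of Lemma~\ref{lem:free}, the surplus--base field is at least $e\,D_c$.
\item The surplus--surplus binding is $-\sum_{a<b}\kappa(p_b-p_a)\,x_ax_b\ge-\sum_{a<b}x_ax_b$, because $0\le\kappa\le1$.
\item The intra-layer terms equal $\sum_a f(1+x_a)=-\sum_a x_a(x_a+1)/2$.
\end{itemize}
Adding the last two gives
\[
-\tfrac12\Bigl(\sum_a x_a\Bigr)^2-\tfrac12\sum_a x_a=-\tfrac12 e^2-\tfrac12 e .
\]
Hence every profile satisfies $E_\LB\ge E_\bg(D)+e\,D_c-\tfrac12e^2-\tfrac12e$.

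\emph{Upper bound.} Place all the surplus in one central layer $c$. The binding term is then empty. The field equals $e\,D_c$ exactly, by the definition of $D_c$ and its attainment at a central site. The intra-layer term equals $-e(e+1)/2$. So this profile attains the lower bound, and
\[
\min E_\LB=E_\bg(D)+e\,D_c-\tfrac12e^2-\tfrac12e
\]
holds exactly.

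\emph{Asymptotic expansion.} It remains to expand the two lattice quantities using $\kappa(d)=O(d^{-6})$.
\begin{itemize}
\item \emph{Background.} We have
\[
E_\bg(D)=-\sum_{d=1}^{D-1}(D-d)\kappa(d)=-D\sigma_0+C_\bg+\sum_{d\ge D}(D-d)\kappa(d).
\]
The tail is bounded by $\sum_{d\ge D}d\,\kappa(d)=O(D^{-4})$.
\item \emph{Central field.} We have $D_c=-(F(c-1)+F(D-c))$, with both arguments at least $\lfloor (D-1)/2\rfloor$. Since $F(m)=\sigma_0-\sum_{d>m}\kappa(d)=\sigma_0-O(m^{-5})$, this gives $D_c=-2\sigma_0+O(D^{-5})$.
\end{itemize}
Substituting these into the exact formula gives $-\tfrac12e^2-\sigma_0D-(\tfrac12+2\sigma_0)e+C_\bg$, with error $O(D^{-4})+O(e\,D^{-5})$.

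The main obstacle is this last error term. The field tail contributes $O(e D^{-5})$, which is absorbed into $O(D^{-4})$ only when $e=O(D)$. I would state this as the regime of the lemma. It holds in the relevant range: near the threshold $D\approx\rho_\KS$ we have $e=\Theta(\sqrt N)$ and $D\sim N$, as Theorem~\ref{thm:asym} confirms. Outside that regime I would keep the error as $O((1+e/D)D^{-4})$. Nothing else is delicate: the lower bound is exact because $\kappa\le1$ makes the binding plus intra-layer terms collapse to $-\binom{e+1}{2}$ regardless of the arrangement.
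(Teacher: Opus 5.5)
Your proof is correct and follows the paper's argument. The paper likewise uses $\kappa\le1$ to collapse the surplus binding and intra-layer terms into $-\tfrac12e^2-\tfrac12e$, bounds the surplus field by $e$ times the central field, notes that the central pile attains both bounds, obtains the exact value $E_\bg(D)-\tfrac e2-\tfrac12e^2-e\Phi_c$ (your $D_c=-\Phi_c$), and expands it via the $O(D^{-4})$ and $O(D^{-5})$ lattice tails. Your handling of the $e\cdot O(D^{-5})$ term also matches: the paper absorbs it by invoking $e=O(\sqrt D)$ near the threshold, which is a special case of the regime $e=O(D)$ you propose to state explicitly.
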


\begin{proof}
Write $n_k=1+e_k$ with $e_k\ge0$ and $\sum_k e_k=e$. Using $f(1+e_k)=-\binom{1+e_k}2=-\tfrac12(e_k+e_k^2)$ and $\bar v=-\kappa$,
\begin{equation}\label{eq:elbsplit}
E_\LB = E_\bg(D) - \frac e2 - Q(e) - \sum_i e_i\,\Phi(i),
\end{equation}
where $E_\bg(D)=-\sum_{d=1}^{D-1}(D-d)\,\kappa(d)$ is the all-ones chain energy, $\Phi(i)=\sum_{j\ne i}\kappa(|i-j|)$ the field at layer $i$, and $Q(e)=\tfrac12\sum_k e_k^2+\sum_{i<j}\kappa(|i-j|)\,e_i e_j$. Minimizing $E_\LB$ is maximizing $Q(e)+\sum_i e_i\Phi(i)$, and each term is bounded independently.

Since $\kappa\le1$ and $e_i e_j\ge0$, the cross terms obey $\sum_{i<j}\kappa(|i-j|)\,e_i e_j\le\sum_{i<j}e_i e_j$, whence
\begin{equation}\label{eq:Qbound}
Q(e) \;\le\; \frac12\sum_i e_i^2 + \sum_{i<j} e_i e_j = \frac12\Bigl(\sum_i e_i\Bigr)^2 = \frac12 e^2,
\end{equation}
the middle step being the completed square, whose two-layer case
\begin{equation}\label{eq:square}
\frac12(e_1^2+e_2^2)+e_1 e_2 = \frac12(e_1+e_2)^2
\end{equation}
shows why concentrating surplus into layers within index distance 2 costs nothing: there $\kappa=1$ and \eqref{eq:Qbound} is an equality. For the field, $\sum_i e_i\Phi(i)\le e\,\Phi_c$ with $\Phi_c:=\max_p\Phi(p)$, since the weights $e_i\ge0$ sum to $e$. Thus concentration maximizes the quadratic part and centring maximizes the field; the centred pile ($e$ at one layer $p^\star\in\arg\max_p\Phi$) attains both maxima at once, so it is the exact minimizer and
\begin{equation}\label{eq:minelb}
\min_n E_\LB = E_\bg(D) - \frac e2 - \frac12 e^2 - e\,\Phi_c.
\end{equation}
Only $\kappa\le1$, $e_i\ge0$, and the concavity of $F$ below enter here; Proposition~\ref{prop:R} is not used.

For the asymptotics, $\Phi(p)=F(p-1)+F(D-p)$ with $F(m)=\sum_{d=1}^m\kappa(d)$; as $\kappa$ is nonincreasing, $F$ is concave, so $\Phi_c$ is attained centrally and $\Phi_c=2\sigma_0-O(D^{-5})$. Writing $E_\bg(D)=-\sigma_0 D+C_\bg+R(D)$ with $R(D)=D\sum_{d\ge D}\kappa(d)-\sum_{d\ge D} d\,\kappa(d)$, the tails $\sum_{d\ge D}\kappa=\tfrac25 D^{-5}+o(D^{-5})$ and $\sum_{d\ge D} d\kappa=\tfrac12 D^{-4}+o(D^{-4})$ (from $\kappa(d)\sim 2d^{-6}$) give $R(D)=-\tfrac1{10}D^{-4}+o(D^{-4})$. Substituting \eqref{eq:minelb}, and using $e=O(\sqrt D)$ so that $e\cdot O(D^{-5})=O(D^{-9/2})$ is absorbed, yields \eqref{eq:closed}.
\end{proof}

\begin{remark}\label{rem:tail}
The $O(D^{-4})$ tail in \eqref{eq:closed} is negative with explicit leading term $-\tfrac1{10}D^{-4}$, from $D\sum_{d\ge D}\kappa(d)-\sum_{d\ge D} d\,\kappa(d)=(\tfrac25-\tfrac12)D^{-4}+o(D^{-4})$; the closed form thus overestimates the exact minimum by $\tfrac1{10}D^{-4}+o(D^{-4})$.
\end{remark}

\begin{theorem}[Asymptotics of the KS layer bound]\label{thm:asym}
Let $u_N:=|V_\vput(N)|/N$, and suppose $\sigma_0<\inf_N u_N$ and $\sup_N u_N<\infty$ (both hold on the range of interest: $u_N$ rises from $u_{38}=4.58$ toward the bulk magnitude $\approx8.6$, all well above $\sigma_0\approx2.0$). Let $e^\star(N)$ be the positive root of
\begin{equation}\label{eq:quad}
e^2 + (1+2\sigma_0)\,e - 2(u_N-\sigma_0)N - 2C_\bg = 0,
\end{equation}
the equation obtained from Lemma~\ref{lem:exactmin} on dropping its $O(D^{-4})$ tail. Then
\begin{equation}\label{eq:estar}
e^\star(N) = \sqrt{2(u_N-\sigma_0)\,N} - \frac{1+2\sigma_0}2 + O\bigl(N^{-1/2}\bigr),
\end{equation}
and the surplus of the exact KS layer-span bound satisfies
\begin{equation}\label{eq:surplus}
N-\rho_\KS(N) = \sqrt{2(u_N-\sigma_0)\,N} + O(1) = \Theta(\sqrt N).
\end{equation}
In particular the empirical linear fit $\rho_\KS\approx0.798\,N$ over $N\le150$ is a finite-size artifact.
\end{theorem}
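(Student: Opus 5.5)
The proof has two parts: an explicit expansion of the root of \eqref{eq:quad}, and a sandwich argument placing $N-\rho_\KS(N)$ within $O(1)$ of $e^\star(N)$.

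\emph{Expansion of the root.} Put $A:=2(u_N-\sigma_0)N+2C_\bg$ and $b:=1+2\sigma_0$. The positive root is
\[
e^\star=\tfrac12\bigl(-b+\sqrt{b^2+4A}\bigr)=\sqrt{A+b^2/4}-\tfrac b2 .
\]
Since $\sigma_0<\inf u_N$ and $\sup u_N<\infty$, we have $A\asymp N$ uniformly. Hence $\sqrt{A+b^2/4}=\sqrt{2(u_N-\sigma_0)N}+O(N^{-1/2})$, because the additive constant $2C_\bg+b^2/4$ contributes $O(1/\sqrt{A})$. This gives \eqref{eq:estar}.

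\emph{Sandwich.} By \eqref{eq:rule}, $\rho_\KS(N)$ is the largest $D$ for which $m(D):=\min E_\LB$ at span $D$ fails to exceed $V_\vput(N)=-u_N N$, read as the published layer-decrement procedure. By Lemma~\ref{lem:exactmin}, with $e=N-D$ and $D=N-e$,
\[
m(N-e)+u_N N=-\tfrac12\Bigl[e^2+(1+2\sigma_0)e-2(u_N-\sigma_0)N-2C_\bg\Bigr]+O(D^{-4}),
\]
where I used $-\sigma_0 D=-\sigma_0 N+\sigma_0 e$. So $m(N-e)-V_\vput=-\tfrac12 P(e)+O(D^{-4})$, with $P$ the quadratic in \eqref{eq:quad}. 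Near the root, $P'(e)=2e+b\asymp\sqrt N$. Consequently, for $e\ge e^\star+1$ the difference is at most $-c\sqrt N+O(N^{-4})<0$, and for $e\le e^\star-1$ it is positive. I would write this with an explicit gap, say $|P(e)|\ge P'(e^\star-1)\ge\sqrt N$ for $|e-e^\star|\ge1$, using convexity of $P$ on $e\ge0$.

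Here $D\sim N$, so the $O(D^{-4})$ tail is negligible. Lemma~\ref{lem:exactmin} is actually exact up to an explicit remainder through \eqref{eq:minelb}, which makes the tail bound uniform. It follows that the threshold surplus $N-\rho_\KS$ lies in $[e^\star-1,e^\star+1]$, and combining with \eqref{eq:estar} yields \eqref{eq:surplus}. The $\Theta(\sqrt N)$ statement follows because $u_N-\sigma_0$ is bounded above and bounded away from $0$. The artifact claim follows because $\rho_\KS/N=1-\Theta(N^{-1/2})\to1$, which is incompatible with a fixed slope of $0.798$.

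\emph{Main obstacle.} The hard step is making the sandwich rigorous with respect to how $\rho_\KS$ is defined. The decrement rule tests spans from the top down, so I must ensure that $m(D)-V_\vput$ is monotone in $D$ on the relevant range, and not merely sign-correct at $e^\star\pm1$. Otherwise a span with larger $e$ could fail \eqref{eq:rule} again. I would handle this with the convexity of $P$ in $e$: $P$ is increasing for $e>-b/2$, so $m(N-e)-V_\vput$ is decreasing in $e$ there, up to the uniformly small tail. For very small $D$ (large $e$), \eqref{eq:minelb} stays monotone directly, since $-\tfrac12 e^2$ dominates. I would also check that the uniformity in $N$ of all $O(\cdot)$ constants rests solely on the two stated hypotheses on $u_N$.
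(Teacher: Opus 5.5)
Your proposal is correct and follows the paper's route: substitute the closed form of Lemma~\ref{lem:exactmin} into the rule \eqref{eq:rule} to get $m(N-e)-V_\vput(N)=-\tfrac12P(e)+O(D^{-4})$, pin $N-\rho_\KS(N)$ within $O(1)$ of $e^\star$ by monotonicity in the span, and expand $e^\star=\sqrt{A+b^2/4}-b/2$. The paper simply cites that $M(D)$ is increasing, whereas you make the monotonicity step quantitative via convexity of $P$ and the gap $P'(e^\star-1)\asymp\sqrt N$ against the $O(N^{-4})$ tail. One cosmetic point: your two sign facts (excluded for $e\le e^\star-1$, not excluded for $e\ge e^\star+1$) literally give $N-\rho_\KS\in(e^\star-1,e^\star+2)$ rather than $[e^\star-1,e^\star+1]$; applying the same gap at $e^\star$ itself sharpens this to within $O(N^{-9/2})$ of $[e^\star,e^\star+1]$. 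Also, only spans with $D\ge N-e^\star-2\asymp N$ enter the sandwich, so your small-$D$ monotonicity worry is moot, and neither point affects \eqref{eq:surplus}.
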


\begin{proof}
$M(D):=\min_n E_\LB$ is increasing in $D$ (Lemma~\ref{lem:exactmin}), and by the rule \eqref{eq:rule} $\rho_\KS$ is the largest integer span with $M(D)\le V_\vput=-u_N N$; thus $M(\rho_\KS)\le V_\vput<M(\rho_\KS+1)$ pins $N-\rho_\KS$ to within 1 of the real crossing of the exact finite minimum. Substituting the closed form of Lemma~\ref{lem:exactmin} and clearing its $O(D^{-4})$ tail, which moves that crossing by $O(N^{-9/2})$, yields \eqref{eq:quad}; its positive root expands as \eqref{eq:estar} via $\sqrt{1+x}=1+O(x)$. Hence $N-\rho_\KS=e^\star+O(1)$, which is \eqref{eq:surplus}; the rate $\Theta(\sqrt N)$ follows from $0<\inf_N(u_N-\sigma_0)\le\sup_N(u_N-\sigma_0)<\infty$.
\end{proof}

Beyond the rate, the clean prediction $N-\lceil e^\star(N)\rceil$ recovers the exact integer bound: it matches the independently computed $\rho_\KS$ with zero discrepancy at all 92 certified sizes (Table~\ref{tab:summary}) and at every ladder size to $N=2000$, 114 in all (script \texttt{verify\_asymptotic.py}). We state this as an identity verified on that range rather than for all $N$: the $O(D^{-4})$ tail dropped in \eqref{eq:quad} shifts $e^\star$ by $O(N^{-9/2})$ and could change $\lceil e^\star\rceil$ only where $e^\star$ lies within that distance of an integer, which happens at none of the tested sizes. This settles a point that \cite{ks2025} left open, reporting the asymptotics of the bound as difficult to analyze and empirically linear: the linearity is the finite-size face of $N-\Theta(\sqrt N)$, and the ratio $e/\sqrt N$ rising from $2.80$ toward $3.24$ as $N$ grows is $\sqrt{2(u_N-\sigma_0)}$ approaching its bulk limit $\approx3.6$. Concurrent work of Kiessling and Wales \cite{kiessling2025} derives a priori bounds on the smallest pairwise distance by a virial identity, a complementary a priori distance bound for the same branch-and-bound context.

\begin{remark}[Subleading term, conditional]\label{rem:subleading}
Injecting the liquid-drop expansion $u_N=|e_\infty|-\lambda N^{-1/3}+o(N^{-1/3})$ of the putative energies gives a subleading term,
\[
e^\star(N) = a\sqrt N - b\,N^{1/6} + O(1), \qquad a=\sqrt{2(|e_\infty|-\sigma_0)},
\]
the exponent $1/6=\tfrac12-\tfrac13$ coming from the surface correction. Unlike the leading order, which is unconditional, this term rests on the surface asymptotics of the (putative) global minima, classical but not certified, and is stated only as an indication.
\end{remark}

\paragraph{Growth of the gain.}
The comparison of the two layer programs can be made exact. Write $\delta(n):=f(n)+\binom n2\ge0$ for the per-layer penalty of the refined bound \eqref{eq:f} against the KS bound, and set
\[
\alpha := \frac12+\frac{V^*_6}{30} = \frac12-c_2 = 0.0762646\ldots,
\qquad
\beta := 1-\kappa(3) = 1-2\cdot2^{-6}+2^{-12} = 0.9689941\ldots
\]
Both are exact expressions in $V^*_6$ and powers of two, so every comparison below is certifiable in directed rounding by the same protocol as Section~\ref{sec:numerics}.

\begin{lemma}[Per-layer penalty]\label{lem:penalty}
For every integer $n\ge0$,
\[
\alpha\,(n-4)_+^2 \;\le\; \delta(n) \;\le\; \alpha\,n(n-1),
\]
with equality on the right for $n=6$ and all $n\ge7$. Equivalently, in the surplus variable $x=n-1\ge0$, $\ \delta(1+x)\ge\alpha\,(x-3)_+^2$.
\end{lemma}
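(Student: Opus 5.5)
The plan is to compute $\delta(n)=f(n)+\binom n2$ explicitly from \eqref{eq:f} in each regime and compare it with the two quadratics. Nothing beyond the definition of $f$, the certified values $V^*_5$, $V^*_6$, and the sign of $\alpha$ should be needed.

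\emph{Step 1: constants and the regime $n\ge7$.} First record that $\alpha=\tfrac12+V^*_6/30>0$. This holds because $V^*_6=-12.712062>-15$, and in directed rounding I would use the padded value $V^*_6\le-12.712063$, which still gives $\alpha>0$. For $n\ge7$ we have $f(n)=\tfrac{n(n-1)}{30}V^*_6$, so
\[
\delta(n)=\frac{n(n-1)}{2}+\frac{n(n-1)}{30}V^*_6=\alpha\,n(n-1).
\]
This is the right-hand inequality with equality. The left-hand inequality in this regime reduces to $(n-4)^2\le n(n-1)$, i.e.\ $0\le 7n-16$, which is true for all $n\ge3$. Since $\alpha>0$, the bound follows.

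\emph{Step 2: the finitely many cases $0\le n\le6$.} For $n\le4$ we have $f(n)=V^*_n=-\binom n2$, so $\delta(n)=0$. Both bounds then hold trivially: $(n-4)_+^2=0$ and $\alpha n(n-1)\ge0$. For $n=6$,
\[
\delta(6)=15+V^*_6=30\alpha,
\]
so the right bound is an equality. The left bound $4\alpha\le30\alpha$ holds because $\alpha>0$. For $n=5$, $\delta(5)=10+V^*_5\approx0.896$. Here I must check $\alpha\le\delta(5)\le20\alpha$, i.e.\ $0.0763\le0.896\le1.525$. Both inequalities hold with margins far larger than the $10^{-6}$ padding of $V^*_5$ and $V^*_6$, so they survive the enclosures of Section~\ref{sec:numerics}.

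\emph{Step 3: the surplus form.} Substituting $n=1+x$ gives $(n-4)_+=(x-3)_+$, which turns the left bound into $\delta(1+x)\ge\alpha(x-3)_+^2$ directly.

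There is no real obstacle. The only delicate point is the single numerical case $n=5$, the one value where $\delta$ is given by neither closed form. I would present it as an explicit interval check with the padded certified value of $V^*_5$, so that the lemma stays within the directed-rounding protocol announced just before it.
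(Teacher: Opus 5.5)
Your proposal is correct and is essentially the paper's own proof. You use the same case split $n\le4$, $n=5$, $n=6$, $n\ge7$, with $\delta(n)=\alpha\,n(n-1)$ exactly for $n\ge7$, the reduction $(n-4)^2\le n(n-1)\iff 7n\ge16$, and the single numerical check $\alpha\le\delta(5)=0.896148\le20\alpha$. Your explicit observation that $\alpha>0$, which the left-hand bounds at $n=6$ and $n\ge7$ rely on, is a small point the paper leaves implicit.
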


\begin{proof}
For $n\le4$, $f(n)=V^*_n=-\binom n2$, so $\delta(n)=0$ and both inequalities are trivial. For $n=5$: $\delta(5)=10+V^*_5=0.896148\ge\alpha=\alpha(5-4)^2$ and $\delta(5)\le20\alpha=1.5253$. For $n=6$: $\delta(6)=15+V^*_6=30\alpha$ exactly, and $30\alpha\ge4\alpha$. For $n\ge7$, \eqref{eq:f} gives $\delta(n)=\alpha\,n(n-1)$ exactly, and $n(n-1)\ge(n-4)^2$ iff $7n\ge16$.
\end{proof}

\begin{proposition}[Exact asymptotic gap between the two programs]\label{prop:gap}
Fix the kernel $\bar v$ of \eqref{eq:kernel}. For a span of $D\ge3$ layers and surplus $e=N-D\ge0$, let $M_\KS(D,e)$ and $M_\refi(D,e)$ denote the minima of \eqref{eq:elb} over profiles $n_k\ge1$, $\sum_k n_k=N$, with $f(n)=-\binom n2$ and with the $f$ of \eqref{eq:f}, respectively. Then
\[
\frac\alpha3\,e^2 - 6\alpha e
\;\le\;
M_\refi(D,e)-M_\KS(D,e)
\;\le\;
\frac\alpha3\,e^2 + (1+3\alpha)\,e + 3\alpha.
\]
In particular $M_\refi=M_\KS+\frac\alpha3 e^2+O(e)$, uniformly in $D$.
\end{proposition}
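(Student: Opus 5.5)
The plan is to compare the two programs profile by profile, using $E_\LB^{\refi}(n)=E_\LB^{\KS}(n)+\sum_k\delta(n_k)$. This identity holds because the two programs share the kernel $\bar v$ and differ only in the per-layer term. The upper bound then comes from one explicit refined profile. The lower bound comes from decomposing $E_\LB^{\KS}(n)-M_\KS$ exactly, via the proof of Lemma~\ref{lem:exactmin}, and adding the per-layer penalty of Lemma~\ref{lem:penalty}.

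\emph{Upper bound.} Since $D\ge3$, I take the test profile that splits the surplus as evenly as possible, into parts $x_1,x_2,x_3\in\{\lfloor e/3\rfloor,\lceil e/3\rceil\}$, at three consecutive central sites $p^\star-1,p^\star,p^\star+1$ (shifted inward if $p^\star$ is at the boundary, which can happen only for $D=3$, where the three sites are all the layers). These sites are pairwise at index distance $\le2$, so $\kappa=1$ on every cross pair. By \eqref{eq:elbsplit} the quadratic part is then exactly $\frac12e^2$, as in the equality case of \eqref{eq:Qbound}.

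The field term loses at most $\Phi_c-\Phi(p^\star\pm1)$ per surplus atom. This loss is a single difference $\kappa(m)-\kappa(m+1)\le1$, read off from $\Phi(p)=F(p-1)+F(D-p)$. So $E_\LB^{\KS}$ of this profile is at most $M_\KS+e$. Lemma~\ref{lem:penalty} in the surplus variable gives $\delta(1+x)\le\alpha x(x+1)$. Hence the added penalty is at most $\alpha\bigl(\sum_a x_a^2+e\bigr)\le\alpha\bigl(\tfrac{e^2}3+\tfrac23+e\bigr)$, and the stated upper bound follows with room to spare.

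\emph{Lower bound.} Fix any profile with surplus vector $(e_i)$. From \eqref{eq:elbsplit} and \eqref{eq:minelb},
\[
E_\LB^{\KS}(n)-M_\KS=\Bigl[\tfrac12e^2-Q(e)\Bigr]+\Bigl[e\Phi_c-\sum_i e_i\Phi(i)\Bigr]\ge\sum_{i<j}\bigl(1-\kappa(|i-j|)\bigr)e_ie_j\ge\beta\sum_{|i-j|\ge3}e_ie_j .
\]
This uses $\kappa\le\kappa(3)$ for distances $\ge3$. Lemma~\ref{lem:penalty} together with $(x-3)_+^2\ge x^2-6x$ gives $\sum_i\delta(1+e_i)\ge\alpha\sum_i e_i^2-6\alpha e$. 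It then suffices to show
\[
\alpha\sum_i e_i^2+\beta\sum_{|i-j|\ge3}e_ie_j\ge\tfrac\alpha3e^2 .
\]
Since $\beta\ge2\alpha$ numerically (the ratio is about $12.7$), this reduces to $\sum_ie_i^2+2\sum_{|i-j|\ge3}e_ie_j\ge\frac13e^2$. Writing $e^2=\sum e_i^2+2R_2(e)+2\sum_{|i-j|\ge3}e_ie_j$, the reduction is equivalent to the band-form bound $R_2(e)\le\frac13e^2$ for nonnegative $e_i$ summing to $e$.

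\emph{Main step.} The key and hardest step is $R_2(e)\le e^2/3$. $R_2$ is the edge form of the square of the path graph, whose largest clique has size $3$, since four sites cannot be pairwise within distance $2$. The Motzkin--Straus theorem then gives $\max_{\text{simplex}}\sum_{\text{edges}}x_ix_j=\frac12(1-\frac13)=\frac13$. If I prefer to avoid citing Motzkin--Straus, I would prove the bound directly. First, shift mass from an endpoint of any non-adjacent support pair, which does not decrease the form, to concentrate the support on a clique. Then apply the equal-split bound on three sites. Combining these pieces yields the stated lower bound uniformly in $D$.
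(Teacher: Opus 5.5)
Your proof is correct, and apart from its last step it is the paper's proof. The shared parts are:
\begin{itemize}
\item the profile-wise identity $E_\refi(n)=E_\KS(n)+\sum_k\delta(n_k)$;
\item the two-bracket decomposition of $E_\KS(n)-M_\KS$, bounded below by $\beta\sum_{|i-j|\ge3}e_ie_j$;
\item the penalty estimate via $(x-3)_+^2\ge x^2-6x$, and the use of $2\alpha\le\beta$;
\item the evenly split three-central-layer test profile for the upper bound. Your count $\alpha(\frac{e^2}3+e+\frac23)$ is a little sharper than the paper's $\alpha(\frac{e^2}3+3e+3)$.
\end{itemize}

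The two proofs diverge only in how the final inequality is closed.
\begin{itemize}
\item You reduce it to the band-form bound $R_2(e)\le e^2/3$. You obtain that from the clique number $3$ of the squared path, via Motzkin--Straus or its mass-shifting proof.
\item The paper splits the layers into residue classes mod $3$, which is a proper $3$-colouring of that same graph. It keeps only the same-class pairs among the far pairs. In your normalisation this gives $\sum_k e_k^2+2\sum_{|i-j|\ge3}e_ie_j\ge\sum_r S_r^2$, and Cauchy--Schwarz then gives $\sum_r S_r^2\ge e^2/3$.
\end{itemize}
The colouring route is two lines and needs no citation or iterative argument. Yours identifies $\alpha/3$ as $\alpha$ times the simplex maximum of the width-$2$ band form, which is attained by the equal three-layer split. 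That makes it transparent why the upper and lower leading constants coincide.

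One harmless inaccuracy: the field loss $\Phi_c-\Phi(p^\star\pm1)$ is not always a single step $\kappa(m)-\kappa(m+1)$. For $D=2m$ and $p^\star=m$, the loss at $p^\star-1$ is $\kappa(m-1)-\kappa(m+1)$. It is still a difference of two values of $\kappa$ in $[0,1]$, so the bound by $1$ holds. With it, $E_\KS\le M_\KS+e$ for your test profile still stands.
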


\begin{proof}
\emph{Lower bound.} For any admissible profile $n$, with surpluses $e_k=n_k-1\ge0$,
\[
E_\refi(n) = E_\KS(n) + \sum_k \delta(n_k).
\]
By \eqref{eq:elbsplit} and \eqref{eq:minelb},
\[
E_\KS(n)-M_\KS = \Bigl[\tfrac12 e^2 - Q(e)\Bigr] + \Bigl[e\,\Phi_c - \sum_i e_i\Phi(i)\Bigr],
\]
and both brackets are nonnegative. For the first,
\[
\tfrac12 e^2 - Q(e) = \sum_{i<j}\bigl(1-\kappa(|i-j|)\bigr)e_i e_j \;\ge\; \beta \!\!\sum_{|i-j|\ge3}\!\! e_i e_j,
\]
since $\kappa=1$ at distances $1,2$ and $\kappa\le\kappa(3)$ beyond. Partition the layers into the three residue classes $k\bmod3$; distinct layers in one class lie at distance $\ge3$, so with $S_r$, $T_r$ the sum and the sum of squares of the surpluses in class $r$, and $T=\sum_r T_r=\sum_k e_k^2$,
\[
\beta \!\!\sum_{|i-j|\ge3}\!\! e_i e_j \;\ge\; \frac\beta2 \sum_r \bigl(S_r^2 - T_r\bigr).
\]
By Lemma~\ref{lem:penalty} and $(x-3)_+^2\ge x^2-6x$ (valid for all $x\ge0$), $\sum_k\delta(n_k)\ge\alpha\,(T-6e)$. Adding, then using $T\le\sum_r S_r^2$ and $\alpha\le\beta/2$,
\[
E_\refi(n)-M_\KS \;\ge\; \frac\beta2\sum_r S_r^2 + \Bigl(\alpha-\frac\beta2\Bigr)T - 6\alpha e
\;\ge\; \alpha\sum_r S_r^2 - 6\alpha e
\;\ge\; \frac\alpha3\,e^2 - 6\alpha e,
\]
the last step by Cauchy--Schwarz with $\sum_r S_r=e$. Minimizing over profiles gives the lower bound.

\emph{Upper bound.} Let $n^{(3)}$ spread the surplus as evenly as possible over the three most central layers ($D\ge3$). All surplus pairs sit at index distance $\le2$, where $\kappa=1$, so by \eqref{eq:square} $Q(e)=\tfrac12 e^2$ exactly; and since $\Phi(p)-\Phi(p\pm1)=\kappa(D-p)-\kappa(p)$ in absolute value is at most $1$, each of the three sites carries a field within $1$ of $\Phi_c$, so $\sum_i e_i\Phi(i)\ge e(\Phi_c-1)$. Hence, by \eqref{eq:minelb}, $E_\KS(n^{(3)})\le M_\KS+e$. By Lemma~\ref{lem:penalty} and $e_k\le e/3+1$,
\[
\sum_k \delta(n_k) \;\le\; \alpha\sum_k n_k(n_k-1) = \alpha\Bigl(\sum_k e_k^2 + e\Bigr) \;\le\; \alpha\Bigl(\frac{e^2}3+3e+3\Bigr).
\]
Adding the two estimates bounds $M_\refi\le E_\refi(n^{(3)})$ as stated.
\end{proof}

\begin{corollary}[Asymptotic layer gain]\label{cor:gain}
Let $\rho_\refi(N)$ be the layer-span threshold of the refined program under the rule \eqref{eq:rule}. Under the hypotheses of Theorem~\ref{thm:asym},
\[
\rho_\KS(N)-\rho_\refi(N) = \theta\,\sqrt{2(u_N-\sigma_0)\,N} + O(1) = \Theta(\sqrt N),
\qquad
\theta := \Bigl(1-\tfrac{2\alpha}3\Bigr)^{-1/2}-1 = 0.026433\ldots
\]
\end{corollary}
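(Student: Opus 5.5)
The plan is to treat the refined threshold exactly as Theorem~\ref{thm:asym} treats $\rho_\KS$. I would write the refined minimum, as a function of the surplus $e=N-D$, as a quadratic in $e$ with a modified leading coefficient, and solve for its crossing with $V_\vput(N)=-u_N N$. The two inputs are Lemma~\ref{lem:exactmin} and Proposition~\ref{prop:gap}.

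First I would substitute $D=N-e$ into \eqref{eq:closed}, which gives
\[
M_\KS(N-e,e) = -\tfrac12 e^2 - \sigma_0 N - \bigl(\tfrac12+\sigma_0\bigr)e + C_\bg + O(N^{-4}).
\]
Proposition~\ref{prop:gap} holds uniformly in $D$, so adding its estimate gives
\[
M_\refi(N-e,e) = -\tfrac12\Bigl(1-\tfrac{2\alpha}3\Bigr)e^2 - \sigma_0 N + O(e+1)
\]
for all $0\le e\le N-3$. The $O(e+1)$ term has explicit constants on both sides: $6\alpha$ below, and $1+3\alpha$ plus $3\alpha$ above. Set $a:=1-\tfrac{2\alpha}3>0$ and $c_N:=2(u_N-\sigma_0)>0$. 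By the hypotheses of Theorem~\ref{thm:asym}, $c_N$ is bounded above and away from zero. The refined crossing condition $M_\refi\le -u_N N$ then reads
\[
a\,e^2 \ge c_N N + O(e+1).
\]

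Next I would pin down $e_\refi:=N-\rho_\refi(N)$. Since $\rho_\refi$ is the largest span $D$ with $M_\refi(D)\le V_\vput$, $e_\refi$ is the smallest surplus for which the inequality holds, and it suffices to bracket it. I would not rely on monotonicity of $M_\refi$ in $D$, which is not established for the refined program.

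For the lower side, take any $e\le\sqrt{c_N N/a}-K$ with a suitable constant $K$. The lower bound of Proposition~\ref{prop:gap}, together with the $\Theta(\sqrt N)$ size of $e$, gives $a e^2+O(e)<c_N N$. Hence $M_\refi>V_\vput$ for every such $e$, and no span that large is admissible. Small $e$ is trivial here, because $M_\refi\ge M_\KS$ and $M_\KS$ already exceeds $V_\vput$ there.

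For the upper side, at $e=\lceil\sqrt{c_N N/a}\rceil+K$ the upper bound of the proposition gives $M_\refi\le V_\vput$. So that span is admissible and $\rho_\refi\ge N-e$.

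Together these give
\[
e_\refi = a^{-1/2}\sqrt{c_N N}+O(1).
\]
The expansion of the root uses $\sqrt{X^2+O(X)}=X+O(1)$ with $X\asymp\sqrt N$.

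Finally I would take the difference. Theorem~\ref{thm:asym} gives $N-\rho_\KS=\sqrt{c_N N}+O(1)$, so
\[
\rho_\KS-\rho_\refi = e_\refi-(N-\rho_\KS) = \bigl(a^{-1/2}-1\bigr)\sqrt{c_N N}+O(1).
\]
This is $\theta\sqrt{2(u_N-\sigma_0)N}+O(1)$, and the bounds on $c_N$ give $\Theta(\sqrt N)$.

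The step I expect to be delicate is the bracketing. The $O(e)$ error in Proposition~\ref{prop:gap} is only linear, so it has to be checked that it perturbs the root by $O(1)$ rather than by $O(\sqrt N)$. It does, because the error is of relative order $e/e^2$. The constant $K$ must also be chosen uniformly in $N$, using $\inf_N c_N>0$. A second subtlety is the kernel mismatch: the baseline $\rho_\KS$ was computed with the original KS kernel, while the proposition fixes $\bar v$. This shifts $\sigma_0$ and $C_\bg$ between the two programs, but it only changes $c_N$ at order $O(1)$, so it moves $\sqrt{c_N N}$ by $O(1/\sqrt N)$ and the difference changes by at most $O(1)$. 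The cleanest course is to state the corollary with both programs using $\bar v$, as Proposition~\ref{prop:gap} does, and to note that switching kernels changes $\rho_\KS$ by $O(1)$.
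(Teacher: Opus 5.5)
Your main argument is correct and follows the paper's proof. You combine Lemma~\ref{lem:exactmin} with Proposition~\ref{prop:gap} to get $M_\refi=-\tfrac12(1-\tfrac{2\alpha}3)e^2-\sigma_0N+O(e+1)$. You then locate the crossing with $-u_NN$ to within $O(1)$, since a linear error perturbs a root of size $\Theta(\sqrt N)$ only by $O(1)$, and subtract Theorem~\ref{thm:asym}. Your explicit bracketing makes the paper's ``as in the proof of Theorem~\ref{thm:asym}'' precise without needing monotonicity of $M_\refi$ in $D$. The defining property $M_\refi(\rho_\refi)\le V_\vput<M_\refi(\rho_\refi+1)$, together with the two-sided estimate, would serve equally well.

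A small point on the error term: \eqref{eq:closed} carries $O(D^{-4})$ and was derived for $e=O(\sqrt D)$. So do not quote it with $O(N^{-4})$ for all $0\le e\le N-3$. The uniform $O(e+1)$ form follows directly from \eqref{eq:minelb}, using $E_\bg(D)=-\sigma_0D+O(1)$ and $0\le\Phi_c\le2\sigma_0$, and your bracketing only needs $e=O(\sqrt N)$ anyway.

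Your kernel-mismatch remark, however, is wrong and should be dropped. Switching from $\bar v$ to the KS kernel changes $\sigma_0$ by $-V_{\mathrm{LJ}}(2)=2^{-5}-2^{-12}\approx0.031$. That changes $c_N$ by a fixed constant $\Delta\approx0.062$. A constant change in $c_N$ moves $\sqrt{c_NN}$ by $\Delta\sqrt N/(\sqrt{c_N+\Delta}+\sqrt{c_N})=\Theta(\sqrt N)$, not $O(1/\sqrt N)$.

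Put differently, $\sigma_0$ enters the energy through $-\sigma_0 D$ with $D\sim N$, an energy shift of order $N$. The slope $|\partial M/\partial e|$ is only about $e=\Theta(\sqrt N)$, so the crossing moves by $\Theta(\sqrt N)$. Your $O(1/\sqrt N)$ reasoning is valid for the $C_\bg$ shift, which is $O(1)$ in energy, but not for the $\sigma_0$ shift.

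Concretely, switching kernels moves $N-\rho_\KS$ by about $(2^{-5}-2^{-12})\sqrt{N/c_N}$. That is roughly $1.17/c_N$ times the gain $\theta\sqrt{c_NN}$ itself: only a fraction of a layer for $N\le200$, but not $O(1)$.

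Hence the corollary, with its constant $\theta$, holds only when both thresholds are computed with the same kernel. This is what the paper's proof tacitly does, using one $\sigma_0$ in both $e^\star_\KS$ and $e^\star_\refi$, and Remark~\ref{rem:gain}(iii) covers either common choice. In the mixed setting of the numerics (baseline on the KS kernel, refined program on $\bar v$), the displayed formula fails by a $\Theta(\sqrt N)$ term. There the coefficient of $\sqrt N$ becomes $(1+\theta)\sqrt{c_N}-\sqrt{c_N+2^{-4}-2^{-11}}$ rather than $\theta\sqrt{c_N}$. So your ``cleanest course'' (both programs on $\bar v$) is the right one, but the claim that switching kernels costs only $O(1)$ is false.
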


\begin{proof}
By Proposition~\ref{prop:gap} and Lemma~\ref{lem:exactmin}, $M_\refi(D,e)=-\bigl(\tfrac12-\tfrac\alpha3\bigr)e^2-\sigma_0 D+O(e)$ uniformly. As in the proof of Theorem~\ref{thm:asym}, the rule \eqref{eq:rule} pins $e^\star_\refi:=N-\rho_\refi$ to within $O(1)$ of the positive root of $(\tfrac12-\tfrac\alpha3)\,e^2=(u_N-\sigma_0)N+O(\sqrt N)$, i.e.
\[
e^\star_\refi = \Bigl(1-\tfrac{2\alpha}3\Bigr)^{-1/2}\sqrt{2(u_N-\sigma_0)\,N} + O(1).
\]
Subtracting $e^\star_\KS=\sqrt{2(u_N-\sigma_0)\,N}+O(1)$ of Theorem~\ref{thm:asym} yields the display; the rate $\Theta(\sqrt N)$ follows from $0<\inf_N(u_N-\sigma_0)\le\sup_N(u_N-\sigma_0)<\infty$.
\end{proof}

\begin{remark}\label{rem:gain}
(i) The lower bound of Proposition~\ref{prop:gap} holds profile by profile and is arrangement-free, in the spirit of Lemma~\ref{lem:free}: the asymptotic decrement of Corollary~\ref{cor:gain} is therefore itself certifiable without enumeration, given reference energies $V_\vput$ at the sizes concerned. Lacking certified inputs there, we do not tabulate integer decrements.
(ii) The energy coefficient $\alpha/3$ coincides with the constant $\tfrac{1-2c_2}6$ of the three-layer comparison. The layer constant obeys the exact identity $(1-\tfrac{2\alpha}3)\bigl(\theta+\tfrac{\theta^2}2\bigr)=\tfrac\alpha3$: the excess $(\alpha/3)e^{\star2}$ carried by the refined program at the KS threshold is consumed by layer steps of size $(1-\tfrac{2\alpha}3)\,e$, integrated along the growing surplus. This places $\theta=0.026433$ between $\alpha/3=0.025422$ (division by the KS step $e$) and $\tfrac{\alpha/3}{1-2\alpha/3}=0.026783$ (division by the refined step frozen at $e^\star$). In the certified range $e\in[30,40]$ one has $\theta e\approx0.8$--$1.1$, consistent with the observed one-layer gains.
(iii) With the original KS kernel in place of \eqref{eq:kernel}, $\sigma_0=2.003434$ replaces $2.034440$ and $\beta=1-\kappa(3)$ becomes $0.997258$; the condition $2\alpha\le\beta$ still holds, so all statements remain valid, and $\alpha$ and $\theta$ are kernel-independent.
\end{remark}

\end{document}